\documentclass[11pt]{article}
\usepackage[T1]{fontenc}
\usepackage{lmodern}
\usepackage{amsmath,amssymb,amsthm,mathtools}
\usepackage[a4paper,margin=29mm]{geometry}
\usepackage{microtype}
\usepackage[hidelinks]{hyperref}
\numberwithin{equation}{section}
\newtheorem{theorem}{Theorem}[section]
\newtheorem{lemma}[theorem]{Lemma}
\newtheorem{proposition}[theorem]{Proposition}

\theoremstyle{definition}
\newtheorem{definition}[theorem]{Definition}
\theoremstyle{remark}

\newcommand{\Vsub}{\mathcal V_{\leq 1}}
\newcommand{\Vone}{\mathcal V_1}
\newcommand{\OO}{\mathcal O}
\newcommand{\id}{\mathrm{id}}
\newcommand{\up}{\mathord{\uparrow}}

\newcommand{\pos}[1]{(#1)_+}

\title{Probabilistic Powerdomains of Pointed FS-Domains
Are FS-Domains}
\author{Xiaoyong Xi \and Chong Shen \and Dongsheng Zhao}
\date{ }

\begin{document}
\maketitle

\begin{abstract}
The Jung--Tix problem concerns the compatibility of probabilistic powerdomains
with Cartesian closed categories of domains. We prove that both the
subprobability and probability valuation powerdomains of every pointed
FS-domain are FS-domains.
\end{abstract}

{\small
\noindent\textbf{Keywords.} Pointed FS-domains; probabilistic powerdomains;
continuous valuations; finite separation; Jung--Tix problem.\par
\smallskip
\noindent\textbf{2020 Mathematics Subject Classification.}
Primary 06B35; Secondary 68Q55.\par
}

\section{Introduction}

The Jung--Tix problem asks for compatibility between probabilistic
powerdomains and Cartesian closed categories of semantic domains
\cite{JungTix}. We address its FS-domain closure form: whether the
subprobability valuation powerdomain of every pointed FS-domain is FS.
This is distinct from finding one Cartesian closed class of continuous
domains preserved by valuation powerdomains.

Chen, Kou and Lyu establish such closure for the countably based class
$\omega\mathbf{FVA}$, and extend their construction to the class
$\mathbf{FVA}$ without a countability restriction
\cite[Theorem~5.22 and Remark~5.23]{ChenKouLyu}. Their conclusion leaves
the general FS closure question open. The scope of the present theorem
is every pointed FS-domain, without requiring finite-valuation
factorization approximants.

For a dcpo $D$, we write $\Vsub(D)$ for the dcpo of continuous
subprobability valuations on $D$, and $\Vone(D)$ for the dcpo of
continuous probability valuations on $D$. The subscripts specify the
total mass: $\mu(D)\leq1$ in $\Vsub(D)$ and $\mu(D)=1$ in $\Vone(D)$.
Both are ordered pointwise on Scott-open sets: $\mu\leq\nu$ means
$\mu(U)\leq\nu(U)$ for every Scott-open $U\subseteq D$.
The word \emph{pointed} means that $D$ has a least element, denoted by
$\bot$. The definitions of FS-domains and continuous valuations are
recalled in Section~\ref{sec:prelim}.

\begin{theorem}\label{thm:main}
If $D$ is a pointed FS-domain, then both $\Vsub(D)$ and
$\Vone(D)$ are FS-domains.
\end{theorem}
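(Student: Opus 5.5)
The plan is to build an approximate identity on $\Vsub(D)$ directly from one on $D$, using the pushforward functor and a continuous ``deflation'' that creates room for rounding. Recall that $f\colon D\to D$ is \emph{finitely separated} from $\id$ if there is a finite $M\subseteq D$ such that for every $x$ some $m\in M$ satisfies $f(x)\leq m\leq x$. $D$ is FS if there is a directed family of such Scott-continuous $f\leq\id$ with supremum $\id$.

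\textbf{Step 1 (pushforward).} Fix such a directed family $(f_i)_{i\in I}$ on $D$. For each $i$ put $F_i(\mu)=\mu\circ f_i^{-1}$ on $\Vsub(D)$. Each $F_i$ is Scott-continuous and satisfies $F_i\leq\id$, because $f_i\leq\id$ gives $f_i^{-1}(U)\subseteq U$ for Scott-open $U$. The family is directed, and $\sup_i F_i=\id$: for Scott-open $U$ we have $U=\bigcup_i f_i^{-1}(U)$ as a directed union, and continuous valuations preserve directed unions of open sets. The same maps preserve $\Vone(D)$.

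\textbf{Step 2 (deflation).} $F_i$ itself will generally not be finitely separated, since $\Vsub(D)$ has a continuum of masses. I therefore compose with deflations. On $\Vsub(D)$ take $s_\varepsilon(\mu)=(1-\varepsilon)\mu$. On $\Vone(D)$ take $s_\varepsilon(\mu)=(1-\varepsilon)\mu+\varepsilon\delta_\bot$; this is where pointedness is used, since $\delta_\bot$ is the least element of $\Vone(D)$ and so $s_\varepsilon\leq\id$. Both maps are Scott-continuous, and they form a directed family whose supremum is $\id$ as $\varepsilon\to0$. Hence $G_{i,\varepsilon}=s_\varepsilon\circ F_i$ is a directed family of Scott-continuous maps with $G_{i,\varepsilon}\leq\id$ and $\sup G_{i,\varepsilon}=\id$. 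It remains to show each $G_{i,\varepsilon}$ is finitely separated from $\id$.

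\textbf{Step 3 (finite separating set).} Let $M$ separate $f_i$ from $\id$, and let $N$ be a large integer. The candidate separating set $\mathcal M$ consists of the simple valuations $\sum_{m\in M} r_m\delta_m$ with each $r_m\in\{0,\tfrac1N,\dots,1\}$ and total mass at most $1$. For $\Vone(D)$ I add the leftover mass at $\bot$ (first enlarging $M$ to contain $\bot$). Given $\mu$, I need some $\nu\in\mathcal M$ with
\[
(1-\varepsilon)\,\mu\circ f_i^{-1}\;\leq\;\nu\;\leq\;\mu .
\]
First I produce a ``transport'' $\nu_0=\sum_m a_m\delta_m$ with $\mu\circ f_i^{-1}\leq\nu_0\leq\mu$, sending the mass at each $x$ to an $m$ with $f_i(x)\leq m\leq x$. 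Then I round the $a_m$ down to the grid to obtain $\nu$. Rounding down keeps $\nu\leq\mu$, and the loss is at most $|M|/N$. For $N$ large relative to $\varepsilon$, the deflation factor should absorb this loss.

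\textbf{Main obstacle.} The difficulty is Step 3, and in two places. The first is constructing $\nu_0$ without a measurable selection $x\mapsto m$, since $\mu$ is only a valuation on Scott-open sets. I would attempt this through a splitting lemma in the style of Jones and Edalat. Concretely, I would approximate $\mu$ from below by simple valuations $\sum_j b_j\delta_{y_j}$ (way-below approximants), assign each $y_j$ a separating point $m_j$, and pass to a limit over the finitely many possible assignments $\{y_j\}\to M$. The second is that the additive rounding loss $|M|/N$ is not multiplicative. On open sets $U$ with tiny $\mu(f_i^{-1}U)$, the factor $(1-\varepsilon)$ alone may not cover it. If this fails, I would replace $s_\varepsilon$ by a truncation $\mu\mapsto\pos{\mu-\varepsilon}$ that subtracts mass additively, arranged to be Scott-continuous, or I would use a two-step factorization $f_i\circ f_j$. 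The idea of the second option is that the first separation supplies room $f_j(x)\ll$-below the point where rounding takes place.
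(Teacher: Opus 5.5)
\textbf{Verdict: there is a genuine gap.} Step~3 cannot be completed. The maps $G_{i,\varepsilon}=s_\varepsilon\circ F_i$ are not finitely separated. The cause is their positive homogeneity, not the rounding bookkeeping, so no choice of $N$ or of grid helps.

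\textbf{Counterexample in $\Vsub(D)$.} Put $\mu=s\delta_\bot$ with $0<s\leq1$. Since $f_i\leq\id$ forces $f_i(\bot)=\bot$, you get $G_{i,\varepsilon}(\mu)=(1-\varepsilon)s\delta_\bot$. Any label $\nu$ with $G_{i,\varepsilon}(\mu)\leq\nu\leq\mu$ therefore satisfies $(1-\varepsilon)s\leq\nu(D)\leq s$. Once $s$ is below the least positive total mass in your finite set, no label exists.

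\textbf{Counterexample in $\Vone(D)$.} Take any $i$ with $f_i$ not constantly $\bot$, pick $x$ with $f_i(x)\neq\bot$, and put $\mu_s=s\delta_x+(1-s)\delta_\bot$. On the Scott-open set $U=D\setminus\{\bot\}$ a label would need $(1-\varepsilon)s\leq\nu(U)\leq s$, which fails in the same way.

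More generally, a finitely separated self-map of $\Vsub(D)$ must send every valuation of sufficiently small total mass to $0$. Already on $\Vsub(\{\bot\})\cong[0,1]$ the map $t\mapsto(1-\varepsilon)t$ is not finitely separated. So the approximants must be nonlinear and must suppress small masses. Your first worry, by contrast, is harmless. Choose labels atom by atom for simple approximants $\mu_k\uparrow\mu$ and take a limit point of the coefficient vectors in $[0,1]^M$; this yields $F_i(\mu)\leq\nu_0\leq\mu$.

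\textbf{Your fallbacks do not supply the missing nonlinearity.}
\begin{itemize}
\item The truncation $U\mapsto\pos{\mu(U)-\varepsilon}$ is not a valuation. On the flat domain $\{\bot,x,y\}$ with $\mu=\tfrac12\delta_x+\tfrac12\delta_y$ and $\varepsilon=\tfrac14$, modularity on the opens $\{x\}$ and $\{y\}$ would require $\tfrac12=\tfrac34$.
\item The factorization $f_if_j$ keeps the map linear, so the homogeneity obstruction persists.
\end{itemize}
A legitimate threshold must act componentwise on a finite decomposition of $D$. Then an upward move of mass into a lightly loaded component lowers that mass's coefficient, which destroys monotonicity. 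Overcoming exactly this is the content of the paper.

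\textbf{How the paper does it.} It splits $\mu$ along the profile map $k(x)=\{m\in M:f(x)\leq m\}$ into exact-cell components $\mu_S$. It gives $g_*\mu_S$, with $g=f^2$, the weight $(1-\eta)w_b(r_S)$. This weight vanishes whenever the cumulative mass $r_S=\mu(A_S)$ is at most $a=e^{-1/b}$. The coefficient drop under a move $S<T$ is repaid by buffer masses $\kappa_S(\mu)=(1-\eta)\sum_{T>S}F_b(r_S,r_T)$ placed at anchors $h_S$ (Lemma~\ref{lem:simplemono}). Two facts then allow rounding the anchor coefficients \emph{up}, not down, on a fixed grid with $n/N<\eta^2a$:
\begin{itemize}
\item the capacity bound $\tau(\mu)\leq(1-\eta^2)\mu$;
\item any open set containing an anchor with positive coefficient has $\mu(U)>a$.
\end{itemize}
This rounding is possible only because small masses have been killed. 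These maps are not visibly directed in $(i,b)$, so directedness comes from the square-saturation Lemma~\ref{lem:saturation}. Finally, $\Vone(D)$ is obtained as a retract of $\Vsub(D)$ via $\mu\mapsto\mu+(1-\mu(D))\delta_\bot$, rather than by a separate deflation.
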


The essential construction is nonlinear. It retains variable valuation
components and adds a finite collection of anchor contributions.
Those contributions share one capacity budget. Their increments
compensate for the decrease of a raw coefficient under an upward
transport. Finite valuations supported on these anchors provide the
separating witnesses; the outputs themselves need not be supported on
those anchors.

The proof uses the continuity and simple-basis theorem for valuations
\cite[Theorem~5.2 and Corollary~5.4]{Jones}, together with finite transport.
The construction specific to this proof is the integrated anchor budget.
The square-refinement
argument is related to \cite[Lemma~4.4]{ChenKouLyu}; we state and prove
the finite-batch version needed here in Lemma~\ref{lem:saturation}.

Section~\ref{sec:prelim} recalls the valuation facts and proves the
square-saturation lemma used below. In Section~\ref{sec:construction},
we construct the approximating maps and establish their Scott continuity
and finite separation. Finally, Section~\ref{sec:cofinal} combines these
maps into a directed FS approximate identity and completes the proof
of Theorem~\ref{thm:main}.

\section{Preliminaries}\label{sec:prelim}

A dcpo is ordered by $\leq$, and directed sets are nonempty.
A subset is directed if every pair of its elements has a common upper
bound in that subset; consequently every finite nonempty subset does too.
We write $\sup A$ for the least upper bound of $A$. The way-below relation
$u\ll x$ means
\[
 \text{for every directed }A\subseteq X,\qquad
 x\leq\sup A\ \Longrightarrow\ (\exists a\in A)\ u\leq a.
\]
Thus $u\ll x$ says that whenever a directed family reaches or exceeds $x$
at its supremum, one member of that family already lies above $u$.
A domain is a continuous dcpo.
A pointed dcpo has a least element, denoted by $\bot$.
We write $\up x=\{y:x\leq y\}$ for the principal upper set of $x$.
The notation $x_i\uparrow x$ means that $(x_i)$ is an increasing
directed family with supremum $x$.
Maps with the same domain and codomain are ordered pointwise:
$p\leq q$ means $p(x)\leq q(x)$ for every $x$.
Juxtaposition of maps denotes composition, so $pq=p\circ q$ and
$q^2=q\circ q$; $\id_X$ denotes the identity map on $X$.
Maps between dcpos are Scott continuous when they are monotone and
preserve directed suprema.
All convergence statements indexed by a directed set refer to nets;
no countable cofinal subset is assumed.

\begin{definition}
A Scott-continuous map $q:X\to X$ is \emph{finitely separated} if a
finite set $M\subseteq X$ satisfies
\begin{equation}\label{eq:fsdef}
 \forall x\in X\quad \exists m\in M\quad q(x)\leq m\leq x.
\end{equation}
Such a set $M$ is called a finite separator for $q$; its elements
will also be called separating labels.
A dcpo $X$ is an FS-domain if it has a directed family of such maps
whose pointwise supremum is $\id_X$.
\end{definition}

We use two standard facts: every finitely separated map $q:X\to X$
satisfies $q(x)\ll x$ for every $x\in X$, and every FS-domain is a
continuous dcpo, hence a domain; see \cite[Lemma~2.5]{ChenKouLyu}.

Write $\OO(D)$ for the collection of Scott-open subsets of $D$.
A continuous subprobability valuation on a dcpo $D$ is a strict,
monotone, modular map $\mu:\OO(D)\to[0,1]$ preserving directed unions
of Scott-open sets. Here strict means $\mu(\varnothing)=0$.
Modularity means
\[
 \mu(U)+\mu(V)=\mu(U\cup V)+\mu(U\cap V)
 \qquad(U,V\in\OO(D)).
\]
The order on $E=\Vsub(D)$ is
\[
 \mu\leq\nu\quad\Longleftrightarrow\quad
 \mu(U)\leq\nu(U)\quad(U\in\OO(D)).
\]
Directed suprema in $E$ are computed on opens. For a Scott-continuous
map $g:D\to D$, the pushforward operator $g_*:E\to E$ is defined by
$(g_*\mu)(U)=\mu(g^{-1}(U))$. Thus $g_*\mu$ is the valuation obtained
by sending the mass through $g$.
The probability valuation dcpo is
$\Vone(D)=\{\mu\in E:\mu(D)=1\}$, with the inherited order.
Its directed suprema are also computed on opens.
A simple valuation is a finite sum $\sum_j p_j\delta_{x_j}$ with
$p_j\geq0$ and $\sum_jp_j\leq1$.
Here $\delta_x$ is the Dirac valuation at $x$: for a Scott-open set $U$,
$\delta_x(U)=1$ if $x\in U$, and $\delta_x(U)=0$ otherwise.
The coefficient $p_j$ is the mass placed at $x_j$, and $\mu(D)$ is the
total mass of $\mu$. Empty sums denote the zero valuation.

We use the standard valuation-domain theorem: if $D$ is continuous,
then $E$ is continuous, and, for each $\mu$, the simple valuations
way below $\mu$ form a directed basis with supremum $\mu$
\cite[Theorem~8]{JungTix}; see also
\cite[Theorem~5.2 and Corollary~5.4]{Jones}.

The following elementary-transport consequence of the splitting lemma
\cite[Lemma~5]{JungTix} extends the finite-domain formulation in
\cite[Lemma~6]{JungTix} to simple valuations on a continuous dcpo.
We give the proof and use it in Lemma~\ref{lem:simplemono}.

\begin{lemma}[Elementary transport]\label{lem:finiteflow}
Let $D$ be a continuous dcpo and let $\mu,\nu\in\Vsub(D)$ be simple
valuations. Then
\[
 \mu\leq\nu
 \quad\Longleftrightarrow\quad
 \begin{gathered}
  \exists N\in\mathbb N_0\ \exists\text{ simple }
  \lambda_0,\ldots,\lambda_N\in\Vsub(D):\\
  \lambda_0=\mu,\quad\lambda_N=\nu,\quad
  \text{each step has one of the following two forms.}
 \end{gathered}
\]
For $0\leq k<N$, the permitted forms are
\[
 \begin{aligned}
 \text{(move)}\quad&
  \lambda_k=\rho_k+c_k\delta_{x_k},\qquad
  \lambda_{k+1}=\rho_k+c_k\delta_{y_k},
  &&x_k\leq y_k,\quad c_k\geq0;\\
 \text{(add)}\quad&
  \lambda_{k+1}=\lambda_k+c_k\delta_{y_k},
  &&c_k\geq0.
 \end{aligned}
\]
Here $\mathbb N_0=\{0,1,2,\ldots\}$, $x_k,y_k\in D$, and $\rho_k$ is
a simple valuation with nonnegative coefficients. In a move, $c_k$ units
of mass already present at $x_k$ are moved to $y_k$, while the remainder
$\rho_k$ stays fixed. An addition places $c_k$ new units at $y_k$.
Zero-mass steps may be omitted. The chain satisfies
\[
 \mu=\lambda_0\leq\lambda_1\leq\cdots\leq\lambda_N=\nu,
 \qquad
 \mu(D)\leq\lambda_k(D)\leq\nu(D)\leq1.
\]
\end{lemma}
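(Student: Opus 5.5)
We prove the two directions separately. The backward direction is routine. A move replaces $c_k\delta_{x_k}$ by $c_k\delta_{y_k}$ with $x_k\leq y_k$. Every Scott-open set is an upper set, so $\delta_{x_k}(U)\leq\delta_{y_k}(U)$ for each $U\in\OO(D)$, and hence $\lambda_k\leq\lambda_{k+1}$. An addition clearly increases every value. Chaining the steps gives $\mu\leq\nu$. Evaluating at $U=D$ shows that moves preserve total mass and additions increase it, which yields $\mu(D)\leq\lambda_k(D)\leq\nu(D)\leq1$.

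For the forward direction, write $\mu=\sum_{i}p_i\delta_{x_i}$ and $\nu=\sum_j q_j\delta_{y_j}$ as finite sums. We invoke the splitting lemma \cite[Lemma~5]{JungTix}. Its hypothesis is that $\sum_{i\in I}p_i\leq\sum_{y_j\in\up\{x_i:i\in I\}}q_j$ for every index set $I$, and its conclusion is that there are $t_{ij}\geq0$ with
\[
 \sum_j t_{ij}=p_i,\qquad \sum_i t_{ij}\leq q_j,\qquad t_{ij}>0\Rightarrow x_i\leq y_j .
\]
The hypothesis has to be checked from $\mu\leq\nu$, but $\up\{x_i:i\in I\}$ need not be Scott-open in a general continuous dcpo. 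We handle this with continuity. For each $i$, the sets $\mathord{\Uparrow}w=\{z: w\ll z\}$ with $w\ll x_i$ are Scott-open, and their union is open (by interpolation) and contains $x_i$. Only finitely many $y_j$ are involved, so for each $i$ we can choose $w_i\ll x_i$ such that a point $y_j$ lies in $\mathord{\Uparrow}w_i$ only if $w_i\leq y_j$. We want this to give exactly the same $y_j$'s as $\up x_i$.

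Choosing the $w_i$ correctly is the main obstacle. If $y_j\not\geq x_i$, then since $x_i=\sup\{w: w\ll x_i\}$ (a directed supremum), some $w\ll x_i$ satisfies $w\not\leq y_j$. Taking a common upper bound over the finitely many such $j$ within the directed set $\{w:w\ll x_i\}$ gives a single $w_i\ll x_i$ with $w_i\not\leq y_j$ whenever $x_i\not\leq y_j$. Then $U_I=\bigcup_{i\in I}\mathord{\Uparrow}w_i$ is Scott-open, contains every $x_i$ with $i\in I$, and contains no $y_j$ outside $\up\{x_i:i\in I\}$. Evaluating $\mu\leq\nu$ at $U_I$ gives
\[
 \sum_{i\in I}p_i\leq\mu(U_I)\leq\nu(U_I)\leq\sum_{y_j\in\up\{x_i:i\in I\}}q_j ,
\]
which is exactly the hypothesis of the splitting lemma.

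With the transport numbers $t_{ij}$ in hand, we build the chain. First we perform, one at a time, the moves sending mass $t_{ij}$ from $x_i$ to $y_j$. Before each such move, the current valuation still contains at least $t_{ij}\delta_{x_i}$ as a summand, because the masses sent out of $x_i$ sum to $p_i$. After all moves we have reached $\sum_j(\sum_i t_{ij})\delta_{y_j}$. We then add $(q_j-\sum_i t_{ij})\delta_{y_j}$ for each $j$, which is nonnegative by the splitting conclusion, and this produces $\nu$. Steps with $t_{ij}=0$ are omitted. Every intermediate $\lambda_k$ is simple, and its total mass lies between $\mu(D)$ and $\nu(D)\leq1$, so each $\lambda_k$ belongs to $\Vsub(D)$.
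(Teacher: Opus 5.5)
Your proof is correct and follows the same route as the paper. You obtain $t_{ij}$ from the splitting lemma, perform one upward move per positive $t_{ij}$ (legal because the unmoved part of source $i$ still contains $t_{ij}\delta_{x_i}$ as a summand), then add the unfilled masses $q_j-\sum_i t_{ij}$, and get the converse from the fact that Scott-open sets are upper sets. Your only addition is the explicit derivation of the Hall-type condition from $\mu\leq\nu$ via the open sets $\bigcup_{i\in I}\mathord{\Uparrow}w_i$, which the paper takes directly from the cited lemma; this step is valid, and it also works without continuity by evaluating on the Scott-open complement of the finite down-set $\mathord{\downarrow}\{y_j: y_j\notin\up\{x_i:i\in I\}\}$.
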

\begin{proof}
Suppose first that $\mu\leq\nu$. Write
\[
 \mu=\sum_{i=1}^{r}\alpha_i\delta_{x_i},\qquad
 \nu=\sum_{j=1}^{s}\beta_j\delta_{y_j},
 \qquad \alpha_i,\beta_j>0,
\]
with distinct points within each support. The supports on the two sides
may overlap. If $\mu=0$, successive additions of the target masses give
the required chain. If $\nu=0$, the inequality $\mu\leq\nu$ forces
$\mu=0$, so the chain of length $N=0$ suffices. We may therefore assume
that $r,s\geq1$.

\emph{Step 1: split the source mass among permissible targets.}
The splitting lemma \cite[Lemma~5]{JungTix} supplies nonnegative real
numbers $t_{ij}$ satisfying
\[
 \sum_{j=1}^{s}t_{ij}=\alpha_i\quad(1\leq i\leq r),\qquad
 \sum_{i=1}^{r}t_{ij}\leq\beta_j\quad(1\leq j\leq s),\qquad
 t_{ij}>0\ \Longrightarrow\ x_i\leq y_j.
\]
The entry $t_{ij}$ is the amount of source mass sent from $x_i$ to $y_j$.
The row equality says that all mass at each source is assigned; the
column inequality says that no target receives more than its prescribed
mass. The last condition permits transport only upward in the order.

\emph{Step 2: perform these finitely many moves.}
List the pairs with $t_{ij}>0$ as
$(i_1,j_1),\ldots,(i_L,j_L)$, and put $c_\ell=t_{i_\ell j_\ell}$.
After the first $\ell$ moves, use the valuation
\[
 \lambda_\ell=
 \sum_{h=1}^{\ell}c_h\delta_{y_{j_h}}
 +\sum_{h=\ell+1}^{L}c_h\delta_{x_{i_h}}
 \qquad(0\leq\ell\leq L).
\]
The first sum consists of portions already moved to their targets; the
second consists of portions still at their sources. The row equalities
give $\lambda_0=\mu$. For $1\leq\ell\leq L$, the transition from
$\lambda_{\ell-1}$ to $\lambda_\ell$ moves the portion $c_\ell$ still
present at $x_{i_\ell}$ to $y_{j_\ell}$. This is a permitted move because
$x_{i_\ell}\leq y_{j_\ell}$. All other terms form a nonnegative
remainder $\rho$. This formula also shows that a portion already moved
is never required to move again, even when source and target points
coincide. Every such step preserves total mass, hence
$\lambda_\ell(D)=\mu(D)\leq1$.

\emph{Step 3: add the unfilled target masses.}
For each target put
\[
 d_j=\beta_j-\sum_{i=1}^{r}t_{ij}\geq0.
\]
At the end of Step 2 we have
$\lambda_L=\sum_j(\beta_j-d_j)\delta_{y_j}$.
Define the remaining steps by
\[
 \lambda_{L+j}=\lambda_L+\sum_{h=1}^{j}d_h\delta_{y_h}
 \qquad(1\leq j\leq s).
\]
Thus $\lambda_{L+s}=\nu$, and each new step is an addition.
During these additions the total mass is
$\mu(D)+\sum_{h=1}^{j}d_h\leq\nu(D)\leq1$.
This constructs the required finite chain.

Conversely, each permitted step increases the valuation order.
Indeed, Scott-open sets are upper sets, so $x\leq y$ implies
$\delta_x(U)\leq\delta_y(U)$ for every Scott-open $U$.
Consequently a move from $x$ to $y$ cannot decrease any open-set
evaluation. An addition also cannot decrease such an evaluation,
because its coefficient is nonnegative. Therefore
$\lambda_k\leq\lambda_{k+1}$ at every step, and transitivity gives
$\mu\leq\nu$. Evaluating this chain on $D$ gives the stated total-mass
bounds.
\end{proof}

We next use a finite-batch form of the square-refinement argument in
\cite[Lemma~4.4]{ChenKouLyu}. It constructs a directed FS approximate
identity from the simultaneous approximation condition stated below.

\begin{lemma}[Finite-batch square saturation]\label{lem:saturation}
Let $X$ be a continuous dcpo, and let $\mathcal H$ be a nonempty set
of finitely separated maps on $X$. Suppose that for every finite
list $u_j\ll x_j$ there is $h\in\mathcal H$ with
$u_j\leq h(x_j)$ for all $j$. Then
$\{h^2:h\in\mathcal H\}$ is a directed FS approximate identity on $X$.
\end{lemma}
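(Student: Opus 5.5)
The plan is to verify the three requirements separately for $\mathcal S=\{h^2:h\in\mathcal H\}$: each member is finitely separated, the family is directed, and its pointwise supremum is $\id_X$. Each of the last two will be reduced to a single application of the hypothesis to a suitably chosen finite list of pairs $u_j\ll x_j$. These lists are produced by interpolation in the continuous dcpo $X$.

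\emph{Finite separation of $h^2$.} A finitely separated $h$ with separator $M$ satisfies $h\leq\id_X$, since $h(x)\leq m\leq x$. For $x\in X$, apply the separation property at the point $h(x)$. This gives $m\in M$ with $h(h(x))\leq m\leq h(x)\leq x$. Thus the same finite set $M$ separates $h^2$. Also $h^2$ is Scott continuous as a composite of Scott-continuous maps, and $\mathcal S$ is nonempty because $\mathcal H$ is.

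\emph{Supremum is the identity.} Fix $x$ and $u\ll x$. By interpolation, choose $v$ with $u\ll v\ll x$. Apply the hypothesis to the two-element list $u\ll v$, $v\ll x$ to obtain $h\in\mathcal H$ with $u\leq h(v)$ and $v\leq h(x)$. Monotonicity then gives $u\leq h(v)\leq h(h(x))$. Since $h^2\leq\id_X$ and $X$ is continuous, $x=\sup\{u:u\ll x\}$ is below the supremum of the values $h^2(x)$. Hence, once directedness is established, the pointwise directed supremum of $\mathcal S$ equals $\id_X$.

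\emph{Directedness, the main step.} Let $h_1,h_2\in\mathcal H$ have finite separators $M_1,M_2$. For each $m\in M_i$, the standard fact gives $h_i(m)\ll m$. Interpolate $h_i(m)\ll v_{i,m}\ll m$. Apply the hypothesis to the finite list consisting of all pairs $h_i(m)\ll v_{i,m}$ and $v_{i,m}\ll m$, for $i=1,2$ and $m\in M_i$. This gives $h\in\mathcal H$ satisfying
\[
 h_i(m)\leq h(v_{i,m})\leq h(h(m))\qquad(m\in M_i,\ i=1,2).
\]
Now take any $x$. Choose $m\in M_i$ with $h_i(x)\leq m\leq x$. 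Then $h_i^2(x)\leq h_i(m)\leq h^2(m)\leq h^2(x)$, using monotonicity and $m\leq x$. Hence $h_1^2,h_2^2\leq h^2$, so $\mathcal S$ is directed.

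The only delicate point is this bounding of $h_i^2$ by $h^2$ pointwise on all of $X$ using only finitely many constraints. The device is to route every $x$ through a separating label $m$. This converts a global inequality into finitely many inequalities at the labels, to which the hypothesis applies.
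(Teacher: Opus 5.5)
Your proposal is correct and follows essentially the same route as the paper: you interpolate $h_i(m)\ll v_{i,m}\ll m$ at the finitely many separating labels, apply the finite-batch hypothesis once to obtain $h_1^2,h_2^2\leq h^2$, and use a single interpolant $u\ll v\ll x$ to get the identity as supremum. The only cosmetic difference is in separating $h^2$: you take the label at the point $h(x)$, while the paper uses $h^2\leq h\leq\id_X$ with the label at $x$; both arguments work.
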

\begin{proof}
Take $p,q\in\mathcal H$. For each $\ell\in\{p,q\}$, choose a finite
separator $M_\ell$. For each $m\in M_\ell$, the standard
finite-separation property \cite[Lemma~2.5(1)]{ChenKouLyu} gives
$\ell(m)\ll m$. By interpolation, choose $z_{\ell,m}$ with
$\ell(m)\ll z_{\ell,m}\ll m$.
Apply the finite-batch assumption to both pairs
$(\ell(m),z_{\ell,m})$ and $(z_{\ell,m},m)$, for every
$\ell\in\{p,q\}$ and $m\in M_\ell$. It supplies one $h\in\mathcal H$
satisfying all the inequalities
\[
 \ell(m)\leq h(z_{\ell,m}),\qquad z_{\ell,m}\leq h(m).
\]
For any $x\in X$ and either $\ell=p$ or $\ell=q$, choose
$m\in M_\ell$ with $\ell(x)\leq m\leq x$. Then
\[
 \ell^2(x)\leq\ell(m)\leq h(z_{\ell,m})\leq h^2(m)\leq h^2(x).
\]
Consequently $p^2,q^2\leq h^2$, proving directedness.
Each $h^2$ is finitely separated by the labels for $h$, since
$h^2\leq h\leq\id_X$.

Given $u\ll x$, choose $u\ll z\ll x$ and one $h$ such that
$u\leq h(z)$ and $z\leq h(x)$. Then $u\leq h^2(x)$.
Continuity of $X$ gives $\sup_{h\in\mathcal H}h^2(x)=x$.
\end{proof}

\section{Construction of finitely separated approximants}\label{sec:construction}

We first organize the underlying domain into finitely many profiles and
choose an anchor for each profile. These anchors will support both the
compensating masses and the finite separating witnesses.
Throughout the construction, $D$ is a pointed continuous dcpo.
Fix a finitely separated map $f:D\to D$ and a nonempty finite
separator $M$. Define
\begin{equation}\label{eq:profile}
 k(x)=\{m\in M:f(x)\leq m\},\qquad R=k[D],\qquad n=|R|.
\end{equation}
Here $R=k[D]=\{k(x):x\in D\}$ is the set of profiles and $n=|R|$
is its cardinality. The finite poset $R$ is ordered by reverse inclusion:
\[
 S\leq T\text{ in }R\quad\Longleftrightarrow\quad S\supseteq T.
\]
Thus $\up S=\{T\in R:T\subseteq S\}$, and $T>S$ means
$T\geq S$ with $T\ne S$. In particular
$k$ is monotone. It is Scott continuous: if $x_i\uparrow x$, then
$f(x_i)\uparrow f(x)$; each $m\notin k(x)$ is eventually absent
from $k(x_i)$, and finitely many eventual absences hold simultaneously.
Hence $k(x_i)=k(x)$ eventually. We have $n\geq1$.

For $S\in R$, put
\[
 C_S=k^{-1}\{S\},\quad A_S=k^{-1}(\up S),\quad
 B_S=k^{-1}(\up S\setminus\{S\}).
\]
Both $A_S$ and $B_S$ are Scott open, and $C_S=A_S\setminus B_S$.
Choose once and for all $x_S\in C_S$, and set
\begin{equation}\label{eq:anchors}
 g=f^2,\qquad h_S=f(x_S).
\end{equation}

\begin{lemma}\label{lem:anchor}
The anchors satisfy
\begin{align}
 g(x)&\leq h_S &&(x\in C_S),\label{eq:anchorupper}\\
 h_S&\leq y &&(k(y)\geq S).\label{eq:anchorlower}
\end{align}
\end{lemma}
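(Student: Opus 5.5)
Both inequalities follow from the separator property of $f$ applied at suitable points, together with the definition of profiles; the only choice is which point to feed into \eqref{eq:fsdef}.

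For \eqref{eq:anchorupper}, I take $x\in C_S$, so $k(x)=S=k(x_S)$. The plan is to route $g(x)=f(f(x))$ through an element of $M$. By \eqref{eq:fsdef} there is $m\in M$ with $f(x)\leq m\leq x$; then $m\in k(x)=S=k(x_S)$, hence $f(x_S)\leq m$. This alone only gives $h_S\leq m$, which points the wrong way, so I instead apply the separator to the point $f(x)$. There is $m'\in M$ with $f(f(x))\leq m'\leq f(x)$. Since $f\leq\id_D$ (a consequence of $f(x)\leq m\leq x$), I get $f(x)\leq x$.

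The difficulty is that comparing $g(x)$ with $f(x_S)$ requires a bound on $x$ from $x_S$, and profiles do not supply one directly. The natural route is monotonicity: $m'\geq f(f(x))$, and $m'\leq f(x)\leq x$. Because $f(x)\leq m'$ fails in general, I use $m$ from the first step, which satisfies $f(x)\leq m$, so $m\in S=k(x_S)$ and $f(x_S)\leq m$. I still need $g(x)\leq f(x_S)$, which I expect to be the main obstacle. My first attempt is to use that $m\leq x$ and $m\in k(x_S)$ also give membership information at $m$ itself. Applying \eqref{eq:fsdef} at $x_S$, I choose $m_S\in M$ with $f(x_S)\leq m_S\leq x_S$. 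Then $m_S\in k(x_S)=S=k(x)$, so $f(x)\leq m_S\leq x_S$. Monotonicity of $f$ then yields $g(x)=f(f(x))\leq f(x_S)=h_S$, which is exactly \eqref{eq:anchorupper}. The key step is thus to separate at the anchor rather than at $x$, and the order of steps is: separate at $x_S$, transfer membership via $k(x)=k(x_S)$, then apply $f$.

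For \eqref{eq:anchorlower}, I take $y$ with $k(y)\geq S$, i.e.\ $k(y)\subseteq S$. I separate at $y$: there is $m\in M$ with $f(y)\leq m\leq y$, so $m\in k(y)\subseteq S=k(x_S)$. By the definition of $k(x_S)$ this gives $h_S=f(x_S)\leq m\leq y$. This part is routine.
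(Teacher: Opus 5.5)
Your final argument is correct and is the paper's proof. For \eqref{eq:anchorupper} you separate at the anchor $x_S$ to obtain a label $m\in S=k(x)$ with $f(x)\leq m\leq x_S$ and then apply $f$; for \eqref{eq:anchorlower} you separate at $y$ and use $k(y)\subseteq S=k(x_S)$. The exploratory detours (separating at $x$ or at $f(x)$) play no role and can be deleted.
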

\begin{proof}
For~\eqref{eq:anchorupper}, choose a separating label $m$ for $x_S$.
Then $m\in S=k(x)$ and $m\leq x_S$, so
$f^2(x)\leq f(m)\leq f(x_S)=h_S$.
For~\eqref{eq:anchorlower}, choose a separating label $m$ for $y$.
Since $k(y)\subseteq S$, we have $m\in S$, and therefore
$h_S=f(x_S)\leq m\leq y$.
\end{proof}

The assignment $S\mapsto h_S$ need not be monotone. The proof only
uses the two displayed inequalities, in their stated directions.

\begin{lemma}\label{lem:cell}
For $\mu\in E$ the formula
\begin{equation}\label{eq:restriction}
 \mu_S(U)=\mu(U\cap A_S)-\mu(U\cap B_S)
 \qquad(U\in\OO(D))
\end{equation}
defines a positive continuous valuation. The cell $C_S$ need not be
Scott open; this formula defines its valuation component using the
two open sets $A_S$ and $B_S$, without evaluating $\mu$ directly on $C_S$.
Moreover
\begin{equation}\label{eq:decomp}
 \sum_{S\in R}\mu_S=\mu,\qquad
 \beta_S=\mu_S(D),\qquad
 r_S=\mu(A_S)=\sum_{T\geq S}\beta_T.
\end{equation}
Here $\beta_S$ is the mass of the component $\mu_S$, whereas $r_S$
is the cumulative mass over all profiles $T\geq S$.
We write $\beta_S(\mu)$ and $r_S(\mu)$ when the input valuation
needs to be displayed; otherwise the argument $\mu$ is suppressed.
If $C_S$ misses an open $U$, then $\mu_S(U)=0$; if $C_S\subseteq U$,
then $\mu_S(U)=\beta_S$.
For a family $(\mu_i)$ we abbreviate $(\mu_i)_S$ by $\mu_{i,S}$ and
write $\beta_{i,S}=\mu_{i,S}(D)$.
Along any directed $\mu_i\uparrow\mu$, $\mu_{i,S}(U)$ converges in
ordinary real numbers to $\mu_S(U)$ for each open $U$.
\end{lemma}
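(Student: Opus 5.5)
We prove Lemma~\ref{lem:cell} by checking the valuation axioms for $\mu_S$ directly from~\eqref{eq:restriction}, using that $B_S\subseteq A_S$ and that $U\cap A_S$, $U\cap B_S$ are Scott open. The inclusion $B_S\subseteq A_S$ holds because $\up S\setminus\{S\}\subseteq\up S$.

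\emph{Valuation axioms.} Monotonicity of $\mu$ gives $\mu_S(U)\geq0$. Strictness is immediate. For monotonicity of $\mu_S$, take $U\subseteq V$ and apply modularity of $\mu$ to the opens $V\cap B_S$ and $U\cap A_S$. Their union is contained in $V\cap A_S$, and their intersection is $U\cap B_S$. This gives
\[
\mu(V\cap B_S)+\mu(U\cap A_S)=\mu\bigl((V\cap B_S)\cup(U\cap A_S)\bigr)+\mu(U\cap B_S)\leq\mu(V\cap A_S)+\mu(U\cap B_S),
\]
which rearranges to $\mu_S(U)\leq\mu_S(V)$. Modularity of $\mu_S$ follows because $W\mapsto\mu(W\cap A_S)$ and $W\mapsto\mu(W\cap B_S)$ are each modular, as intersection distributes over union. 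Preservation of directed unions holds termwise, since both terms are finite reals bounded by $1$.

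\emph{Decomposition.} The key step is the telescoping identity $\sum_S\mu_S=\mu$. First note that the sets $C_S$ partition $D$, because $k$ is total with values in $R$. I would prove by downward induction on the finite poset $R$ the identity
\[
\mu(U\cap A_S)=\sum_{T\geq S}\mu_T(U)\qquad\text{for every open }U.
\]
The point is that $B_S=\bigcup_{T>S}A_T$, and that $A_T\cap A_{T'}$ equals the union of the $A_{T''}$ over $T''\geq T,T'$. By inclusion–exclusion, which modularity permits for finite unions of opens, $\mu(U\cap B_S)$ is an alternating sum of terms $\mu(U\cap A_{T''})$ with $T''>S$. By the induction hypothesis each of these is a sum of $\mu_{T'''}(U)$, and a Möbius-type cancellation leaves $\sum_{T>S}\mu_T(U)$.

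A cleaner route avoids the combinatorics. Observe that $\mu(U\cap A_S)$ and $\mu(U\cap B_S)$ extend to a finitely additive measure on the finite algebra generated by the opens $A_T\cap U$; by the Smiley–Horn–Tarski theorem any modular function on a lattice of sets extends additively to the generated ring. Under that extension $\mu_S(U)$ is the measure of $U\cap C_S$. The claims then become the following:
\begin{itemize}
\item $\sum_S\mu_S=\mu$ is additivity over the partition $\{C_S\}$.
\item $r_S=\mu(A_S)=\sum_{T\geq S}\beta_T$ follows since $A_S=\bigsqcup_{T\geq S}C_T$.
\item If $C_S\cap U=\varnothing$ then $\mu_S(U)=0$.
\item If $C_S\subseteq U$ then $\mu_S(U)=\beta_S$.
\end{itemize}
I expect this identification, via Smiley–Horn–Tarski or the explicit induction, to be the main obstacle. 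Everything else is bookkeeping.

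\emph{Convergence.} Along a directed family $\mu_i\uparrow\mu$, suprema are computed on opens. So $\mu_i(U\cap A_S)\to\mu(U\cap A_S)$ and $\mu_i(U\cap B_S)\to\mu(U\cap B_S)$ as bounded monotone nets of reals. Their difference therefore converges to $\mu_S(U)$, though not necessarily monotonically, which is why the statement asserts only ordinary convergence.
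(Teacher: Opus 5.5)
Your proposal is correct. Outside the decomposition step it matches the paper: nonnegativity, strictness and modularity by subtracting the two modular identities; monotonicity by applying modularity to $V\cap B_S$ and $U\cap A_S$; continuity and the final convergence claim as differences of two convergent real nets.

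The decomposition is where you take a different route. The paper uses neither inclusion--exclusion nor an extension theorem. For fixed $U$ it defines $\varphi_U(W)=\mu(U\cap k^{-1}W)$ on \emph{all} upper sets $W\subseteq R$, not only principal ones, and inducts on $|W|$ by removing a minimal element $S$. Since $W=(W\setminus\{S\})\cup\up S$ with intersection $\up S\setminus\{S\}$, one use of modularity gives $\varphi_U(W)-\varphi_U(W\setminus\{S\})=\mu_S(U)$. The two claims about $C_S\cap U=\varnothing$ and $C_S\subseteq U$ are then checked separately, via $U\cap A_S=U\cap B_S$ and $A_S=B_S\cup(A_S\cap U)$.

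Your Smiley--Horn--Tarski route is valid; apply it to the whole lattice $\OO(D)$, so that $U$, $D$, the $A_T$, the $B_T$ and their intersections all lie in it. It buys the conceptual identity $\mu_S(U)=\bar\mu(U\cap C_S)$. From that identity all four assertions become finite additivity over the partition $\{C_S\}$, which also explains the statement's remark about never evaluating $\mu$ on $C_S$ directly. The price is importing an external extension theorem, where the paper's argument is self-contained and uses modularity only on finitely many opens.

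Your first, explicit-induction sketch understates the work. Inclusion--exclusion over $\{U\cap A_T\}_{T>S}$ produces terms $\mu\bigl(U\cap\bigcap_{T\in F}A_T\bigr)$. These intersections are preimages of generally non-principal upper sets, so they are unions rather than single sets $A_{T''}$. A downward induction on principal sets does not reach them without a further recursion, or without applying the hypothesis to a shrunken open in place of $U$ and using that $\mu_{T''}$ vanishes on opens missing $C_{T''}$. That bookkeeping is exactly what the paper's induction over arbitrary upper sets avoids.
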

\begin{proof}
Strictness follows from $\mu_S(\varnothing)=0$. Since $B_S\subseteq A_S$,
nonnegativity follows from monotonicity of $\mu$; modularity follows by
subtracting the two finite modular identities in~\eqref{eq:restriction}.
For Scott-open $U\subseteq V$, modularity and monotonicity of $\mu$ give
\[
\begin{aligned}
 \mu(U\cap A_S)+\mu(V\cap B_S)
 &=\mu\bigl((U\cap A_S)\cup(V\cap B_S)\bigr)+\mu(U\cap B_S)\\
 &\leq\mu(V\cap A_S)+\mu(U\cap B_S).
\end{aligned}
\]
Subtracting the $B_S$ terms proves $\mu_S(U)\leq\mu_S(V)$.
For a directed union of opens, both terms in~\eqref{eq:restriction}
converge to their respective finite real limits, so their difference
converges too. Together with the monotonicity just proved, this gives
preservation of directed unions, hence continuity.

For fixed $U$, put $\varphi_U(W)=\mu(U\cap k^{-1}W)$ for upper
subsets $W$ of $R$. This is a valuation on the finite poset $R$.
Its atomic decomposition follows by induction on $|W|$.
For nonempty $W$, choose a minimal element $S$ of $W$.
Then $W\setminus\{S\}$ is upper, and modularity gives
\[
 \varphi_U(W)-\varphi_U(W\setminus\{S\})
 =\varphi_U(\up S)-\varphi_U(\up S\setminus\{S\})
 =\mu_S(U).
\]
Starting with the empty set, induction therefore yields
$\varphi_U(W)=\sum_{S\in W}\mu_S(U)$ for every upper $W$.
Taking $W=R$, and then taking $U=D$ and $W=\up S$, proves
\eqref{eq:decomp}.
If $C_S\cap U=\varnothing$, the two intersections in
\eqref{eq:restriction} coincide. If $C_S\subseteq U$, then
$A_S=B_S\cup(A_S\cap U)$, and modularity gives
$\mu_S(U)=\mu(A_S)-\mu(B_S)=\beta_S$.
The last assertion again follows by taking the difference of two
convergent finite open evaluations, this time along $\mu_i$.
\end{proof}

In particular $r_S\geq r_T$ whenever $S\leq T$. It is useful to
distinguish two kinds of monotonicity. For fixed $\mu$, the map
$U\mapsto\mu_S(U)$ is monotone, as just proved. For fixed $S$ and $U$,
however, the map $\mu\mapsto\mu_S(U)$ need not be monotone: it is the
difference of two open evaluations that may increase by different amounts.
Lemma~\ref{lem:cell} asserts ordinary real convergence of these differences
along directed input nets; it does not assert that they increase.

We next introduce the weights and the additional mass placed at the
anchors. This mass will compensate for coefficient losses when mass
moves to a higher profile.

Keep $f$, its separator, and the resulting profiles and anchors fixed.
Choose $b>0$ such that $\eta=n^2b<1$, and let $a=\exp(-1/b)$.
The numbers $\eta$ and $a$ therefore depend on $b$; $n$ is already fixed.
For $r,t\in[0,1]$ define
\begin{equation}\label{eq:wF}
\begin{aligned}
 w_b(0)&=0,\qquad w_b(t)=\max\{0,1+b\log t\}\quad(t>0),\\
 F_b(r,t)&=\int_0^t\pos{w_b(r)-w_b(s)}\,ds.
\end{aligned}
\end{equation}
The subscript $b$ records the parameter in these functions, and $\log$
denotes the natural logarithm. While $b$ is fixed, we abbreviate
$w_b$ and $F_b$ by $w$ and $F$. We restore the subscript when $b$ varies
in Section~\ref{sec:cofinal}.
Here $u_+=\max\{u,0\}$ denotes the positive part of a real number $u$:
negative values are replaced by zero, and nonnegative values are unchanged.
The functions $w$ and $F$ are continuous; $F$ is nondecreasing in
each argument. Also $0\leq w\leq1$, and $w(r)>0$ precisely when $r>a$.

\begin{lemma}\label{lem:integral}
For $r,t\in[0,1]$,
\begin{equation}\label{eq:Fbound}
 0\leq F(r,t)\leq b(r-a)_+\leq br.
\end{equation}
If $0\leq t\leq t+p\leq r$, then
\begin{equation}\label{eq:increment}
 F(r,t+p)-F(r,t)\geq p\bigl(w(r)-w(t+p)\bigr).
\end{equation}
\end{lemma}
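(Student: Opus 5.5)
Both inequalities follow from elementary properties of $w=w_b$ and direct estimates of the integral defining $F=F_b$; no valuation theory is needed.

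\emph{Properties of $w$.} First we record that $w$ is nondecreasing on $[0,1]$, vanishes on $[0,a]$, and satisfies $w(s)=1+b\log s$ for $s\geq a$. Since $1+b\log a=0$, these two descriptions agree at $s=a$.

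\emph{The bound \eqref{eq:Fbound}.} Nonnegativity is immediate because the integrand is a positive part. For the upper bound, the integrand $\pos{w(r)-w(s)}$ vanishes for $s\geq r$, because $w$ is monotone. Hence
\[
 F(r,t)\leq\int_0^r\pos{w(r)-w(s)}\,ds .
\]
If $r\leq a$, then $w(r)=0$ and the integral is $0$. If $r>a$, the integrand equals $w(r)$ on $[0,a]$ and equals $b\log(r/s)$ on $[a,r]$. The integral is therefore
\[
 a\,(1+b\log r)+b\int_a^r\log(r/s)\,ds .
\]
Using the antiderivative $\int\log(r/s)\,ds=s\log(r/s)+s$ and $b\log a=-1$, this simplifies to exactly $b(r-a)$. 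One can also get this without computation: $\int_0^r\pos{w(r)-w(s)}\,ds$ is the integral of the measure of $\{s: w(s)<\theta\}$ over $\theta<w(r)$, which gives the same closed form. The final inequality $b(r-a)_+\leq br$ is trivial since $a>0$.

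\emph{The increment \eqref{eq:increment}.} Write the difference as
\[
 F(r,t+p)-F(r,t)=\int_t^{t+p}\pos{w(r)-w(s)}\,ds .
\]
For $s\in[t,t+p]$, monotonicity gives $w(s)\leq w(t+p)$. Since $t+p\leq r$, we also have $w(t+p)\leq w(r)$. So the integrand is at least $w(r)-w(t+p)\geq0$ on this interval, and integrating over an interval of length $p$ yields the claim.

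The only step requiring any care is the exact evaluation of the integral in \eqref{eq:Fbound}. There one must verify the cancellation coming from $1+b\log a=0$ and handle the case $t<r$ versus $t\geq r$ through monotonicity of the truncation. Everything else is routine.
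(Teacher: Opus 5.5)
Your proof is correct and matches the paper's argument: you bound $F(r,t)$ by $F(r,r)$ because the integrand vanishes beyond $r$, split the integral at $a$ using $1+b\log a=0$ to obtain exactly $b(r-a)$, and bound the integrand below on $[t,t+p]$ using monotonicity of $w$. Your layer-cake remark is a valid alternative way to evaluate the same integral.
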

\begin{proof}
The integrand vanishes beyond $r$. Direct integration gives
$F(r,r)=b(r-a)_+$: it is zero for $r\leq a$, and, for $r>a$,
we have $w(s)=0$ on $[0,a]$ and $w(r)=b\log(r/a)$.
On $[a,r]$, the difference is $w(r)-w(s)=b\log(r/s)$.
Splitting the integral at $a$ therefore gives
\[
 F(r,r)=a b\log(r/a)+\int_a^r b\log(r/s)\,ds=b(r-a).
\]
This proves~\eqref{eq:Fbound}. On $[t,t+p]$ the integrand is
$w(r)-w(s)\geq w(r)-w(t+p)$, proving~\eqref{eq:increment}.
\end{proof}

Using the profiles and anchors fixed above, set
\begin{align}
 \kappa_S(\mu)&=(1-\eta)\sum_{T>S}F(r_S,r_T),\label{eq:C}\\
 Q_{f,b}(\mu)&=(1-\eta)\sum_{S\in R}w(r_S)g_*\mu_S
                    +\sum_{S\in R}\kappa_S(\mu)\delta_{h_S},\label{eq:Q}\\
 \tau(\mu)&=\sum_{S\in R}
 \bigl((1-\eta)w(r_S)\beta_S+\kappa_S(\mu)\bigr)\delta_{h_S}.
 \label{eq:tau}
\end{align}
At this stage these are finite positive sums of continuous valuations.
The first sum in~\eqref{eq:Q} is called the raw part; the second is
the buffer part, placing the additional mass $\kappa_S(\mu)$ at $h_S$.
The dependence of $\kappa_S$ and $\tau$ on the fixed choices $f,b$
is suppressed in their notation.
The next estimate verifies their subprobability budgets.

\begin{lemma}[Common capacity bound]\label{lem:capacity}
For every $\mu\in E$,
\begin{equation}\label{eq:capacity}
 Q_{f,b}(\mu)\leq\tau(\mu)\leq(1-\eta^2)\mu.
\end{equation}
\end{lemma}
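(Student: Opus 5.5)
The plan is to prove the two inequalities in~\eqref{eq:capacity} separately, testing each one on an arbitrary Scott-open set $U\subseteq D$. Since both $Q_{f,b}(\mu)$ and $\tau(\mu)$ are finite positive sums, it suffices to compare them term by term (for the first inequality) or against a lower bound for $\mu(U)$ (for the second).

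\emph{First inequality.} The buffer parts of $Q_{f,b}(\mu)$ and $\tau(\mu)$ coincide. It therefore suffices to show $g_*\mu_S\leq\beta_S\delta_{h_S}$ for each $S\in R$, and then multiply by $(1-\eta)w(r_S)\geq0$ and sum over $S$. Fix $U$ and consider two cases.
\begin{itemize}
\item If $h_S\in U$, then $(g_*\mu_S)(U)=\mu_S(g^{-1}U)\leq\mu_S(D)=\beta_S$, by monotonicity of $\mu_S$ (Lemma~\ref{lem:cell}).
\item If $h_S\notin U$, then $g^{-1}(U)\cap C_S=\varnothing$. Indeed, for $x\in C_S$ we have $g(x)\leq h_S$ by~\eqref{eq:anchorupper}, so $g(x)\in U$ would force $h_S\in U$, because $U$ is an upper set. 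Lemma~\ref{lem:cell} then gives $\mu_S(g^{-1}U)=0$, which is the value of $\beta_S\delta_{h_S}$ on $U$.
\end{itemize}

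\emph{Second inequality.} Fix $U$ and put $W=\{S\in R:h_S\in U\}$. Then $\tau(\mu)(U)=\sum_{S\in W}\bigl((1-\eta)w(r_S)\beta_S+\kappa_S(\mu)\bigr)$.

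The key step is a lower bound for $\mu(U)$. Take $S\in W$ and $y\in A_S$, so $k(y)\geq S$. By~\eqref{eq:anchorlower}, $h_S\leq y$, and since $U$ is upper, $y\in U$. Hence $U\supseteq\bigcup_{S\in W}A_S=k^{-1}(\up W)$, where $\up W$ is the upper set generated by $W$ in $R$. By the atomic decomposition in the proof of Lemma~\ref{lem:cell} (taking $U=D$ there),
\[
 \mu(U)\geq\Sigma:=\sum_{T\in\up W}\beta_T .
\]

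Now bound the two parts of $\tau(\mu)(U)$ by $\Sigma$.
\begin{itemize}
\item For the raw coefficients, use $w\leq1$ to get $\sum_{S\in W}(1-\eta)w(r_S)\beta_S\leq(1-\eta)\Sigma$.
\item For the buffers, use~\eqref{eq:Fbound}: $F(r_S,r_T)\leq br_S$. Summing over the fewer than $n$ profiles $T>S$ gives $\kappa_S(\mu)\leq(1-\eta)nb\,r_S$.
\item For $S\in W$, every $T\geq S$ lies in $\up W$, so $r_S=\sum_{T\geq S}\beta_T\leq\Sigma$. Since $|W|\leq n$, we obtain $\sum_{S\in W}\kappa_S(\mu)\leq(1-\eta)n^2b\,\Sigma=(1-\eta)\eta\,\Sigma$.
\end{itemize}
Adding the two bounds gives
\[
 \tau(\mu)(U)\leq(1-\eta)(1+\eta)\Sigma\leq(1-\eta^2)\mu(U).
\]

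The only delicate point is the containment $U\supseteq k^{-1}(\up W)$. It is what lets the cumulative masses $r_S$ be charged against $\mu(U)$ rather than against $\mu(D)$, and it uses~\eqref{eq:anchorlower} in exactly its stated direction. The rest is bookkeeping with $\eta=n^2b$.
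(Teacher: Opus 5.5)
Your proof is correct and follows essentially the same route as the paper. For the first inequality you use the domination $g_*\mu_S\leq\beta_S\delta_{h_S}$ obtained from \eqref{eq:anchorupper}; for the second you use the containment $k^{-1}(\up W)\subseteq U$ from \eqref{eq:anchorlower}, charging both the raw coefficients and the buffer bound $\kappa_S(\mu)\leq(1-\eta)nb\,r_S$ against $\mu(k^{-1}(\up W))$, with the only differences being notational (your $W$ and $\Sigma$ are the paper's $J$ and $\mu(k^{-1}W)$).
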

\begin{proof}
First we show $g_*\mu_S\leq\beta_S\delta_{h_S}$ by evaluating on a
Scott-open set $U$. If $h_S\in U$, then
$g_*\mu_S(U)\leq\mu_S(D)=\beta_S$. If $h_S\notin U$, no $x\in C_S$
can satisfy $g(x)\in U$: otherwise $g(x)\leq h_S$ from
\eqref{eq:anchorupper} and upperness of $U$ would imply $h_S\in U$.
Thus $C_S$ misses the open $g^{-1}(U)$, and Lemma~\ref{lem:cell} gives
$g_*\mu_S(U)=0$. This proves the claimed domination, and comparison
of the finite sums in~\eqref{eq:Q} and~\eqref{eq:tau} gives $Q\leq\tau$.
Fix an open $U$, put $J=\{S:h_S\in U\}$, and let
$W=\bigcup_{S\in J}\up S$ in $R$.
By~\eqref{eq:anchorlower}, $k^{-1}W\subseteq U$.
The contribution of the coefficients $(1-\eta)w(r_S)\beta_S$
to $\tau(\mu)(U)$ is at most $(1-\eta)\mu(k^{-1}W)$.
By Lemma~\ref{lem:integral},
\[
 \kappa_S(\mu)\leq(1-\eta)nb r_S.
\]
For each $S\in J$, $r_S\leq\mu(k^{-1}W)$, and $|J|\leq n$.
The buffer part on $U$ is therefore at most
\[
 (1-\eta)nb\sum_{S\in J}r_S
 \leq(1-\eta)n^2b\mu(k^{-1}W)
 =(1-\eta)\eta\mu(k^{-1}W).
\]
Adding both parts and using $(1-\eta)(1+\eta)=1-\eta^2$
proves~\eqref{eq:capacity} on every open $U$.
In particular both outputs have total mass at most one.
\end{proof}

We next prove that $Q_{f,b}$ is Scott continuous. The proof has three
stages: monotonicity on simple valuations; ordinary real convergence
along directed input nets, which extends monotonicity to all valuations;
and, finally, preservation of directed suprema. This order matters because
the individual components $\mu_S$ need not be monotone in the input $\mu$.
Throughout, $f$ and $b$ remain fixed and we write $Q=Q_{f,b}$.
Only the input valuation varies; the profiles, anchors and functions
$w,F$ remain fixed.

\begin{lemma}\label{lem:simplemono}
The map $Q=Q_{f,b}$ is monotone on simple valuations.
\end{lemma}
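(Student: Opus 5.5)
The plan is to reduce monotonicity to the elementary steps of Lemma~\ref{lem:finiteflow}. Given simple $\mu\leq\nu$, that lemma supplies a chain $\mu=\lambda_0\leq\cdots\leq\lambda_N=\nu$ of simple valuations in which each step is a move or an addition. By transitivity it therefore suffices to prove $Q(\lambda_k)\leq Q(\lambda_{k+1})$ for a single step.

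For a simple valuation $\lambda=\sum_j p_j\delta_{z_j}$, the cell component is explicit: $\lambda_S=\sum_{k(z_j)=S}p_j\delta_{z_j}$. This follows from the last part of Lemma~\ref{lem:cell} applied to each Dirac term, together with the fact that $\mu\mapsto\mu_S$ is additive. Consequently $g_*\lambda_S=\sum_{k(z_j)=S}p_j\delta_{g(z_j)}$, and $r_S(\lambda)$ is the total mass at points whose profile lies in $\up S$.

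\emph{Additions.} Adding $c\delta_y$ with $k(y)=T_0$ changes only the component $\lambda_{T_0}$, and it does so by adding $c\delta_y$. Every $r_S$ is weakly increased. Since $w$ is nondecreasing, each raw coefficient $w(r_S)$ weakly increases and each $g_*\lambda_S$ weakly increases. Since $F$ is nondecreasing in both arguments, each $\kappa_S$ weakly increases. All terms of~\eqref{eq:Q} are nonnegative combinations, so $Q$ increases.

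\emph{Moves.} Consider a move of $c$ units from $x$ to $y$ with $x\leq y$. Put $S_0=k(x)$ and $T_0=k(y)$, so $S_0\leq T_0$ by monotonicity of $k$.

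If $S_0=T_0$, then all $r_S$ are unchanged, and the raw part changes only by moving the mass $(1-\eta)w(r_{S_0})c$ from $g(x)$ to $g(y)\geq g(x)$. This is an increase.

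If $T_0>S_0$, then $\beta_{S_0}$ drops by $c$ and $\beta_{T_0}$ rises by $c$. Every $r_S$ weakly increases: $r_S$ gains $c$ exactly when $T_0\in\up S$ but $S_0\notin\up S$, and is otherwise unchanged. In particular $r_{S_0}$ is unchanged, because both $S_0$ and $T_0$ lie in $\up S_0$. Write $r'$ for the new values. Two kinds of terms need separate treatment.
\begin{itemize}
\item[(i)] Terms not involving the moved mass. These are the raw terms of the other points and all $\kappa_S$ other than the specific summand $F(r_{S_0},r_{T_0})$. They weakly increase by monotonicity of $w$ and $F$, exactly as for additions.
\item[(ii)] The moved mass itself. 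Before the step it contributes $(1-\eta)w(r_{S_0})c\,\delta_{g(x)}$. After the step it contributes $(1-\eta)w(r'_{T_0})c\,\delta_{g(y)}$. In addition, the buffer at $h_{S_0}$ gains at least $(1-\eta)\bigl(F(r_{S_0},r_{T_0}+c)-F(r_{S_0},r_{T_0})\bigr)$.
\end{itemize}
For (ii), the inequality $r_{T_0}+c=r'_{T_0}\leq r'_{S_0}=r_{S_0}$ holds because $\up T_0\subseteq\up S_0$. Hence~\eqref{eq:increment} applies and gives a buffer gain of at least $(1-\eta)c\bigl(w(r_{S_0})-w(r'_{T_0})\bigr)$.

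This gain is nonnegative. Indeed $r'_{T_0}\leq r_{S_0}$ implies $w(r'_{T_0})\leq w(r_{S_0})$, so the raw coefficient can only drop.

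Now split the old mass $(1-\eta)w(r_{S_0})c$ at $g(x)$ into two portions and move each upward.
\begin{itemize}
\item[$\bullet$] The portion $(1-\eta)w(r'_{T_0})c$ goes to $g(y)$, which is legitimate because $g(x)\leq g(y)$.
\item[$\bullet$] The remainder $(1-\eta)c\bigl(w(r_{S_0})-w(r'_{T_0})\bigr)$ goes to $h_{S_0}$. This is legitimate because $g(x)\leq h_{S_0}$ by~\eqref{eq:anchorupper}, as $x\in C_{S_0}$. This remainder is dominated by the buffer gain just computed.
\end{itemize}
Since Scott-open sets are upper sets, moving mass upward cannot decrease any open-set evaluation, as in the converse part of Lemma~\ref{lem:finiteflow}. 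Combining this with (i) gives $Q(\lambda_k)\leq Q(\lambda_{k+1})$ for every move.

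The main obstacle is step (ii): the coefficient $w(r_S)$ attached to the moved mass genuinely decreases when the mass passes to a higher profile. The point to get right is that the increment of the single summand $F(r_{S_0},r_{T_0})$ in $\kappa_{S_0}$ pays exactly for this loss, through~\eqref{eq:increment}. This requires the hypothesis $t+p\leq r$, verified as $r'_{T_0}\leq r_{S_0}$, and it requires the compensating mass to sit at an anchor above $g(x)$, which~\eqref{eq:anchorupper} provides.
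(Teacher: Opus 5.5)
Your proposal is correct and follows essentially the same route as the paper's proof. You reduce to moves and additions via Lemma~\ref{lem:finiteflow}, and use monotonicity of $w$ and $F$ for additions and for the remainder terms. The coefficient drop $w(r_{S_0})-w(r'_{T_0})$ is compensated by the increment of the summand $F(r_{S_0},r_{T_0})$ in $\kappa_{S_0}$ through~\eqref{eq:increment}, with that mass sitting at $h_{S_0}\geq g(x)$ by~\eqref{eq:anchorupper}. Your explicit formula for $\lambda_S$ on simple $\lambda$ is just a concrete form of the linearity of the cell restriction that the paper uses in~\eqref{eq:rawmove}.
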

\begin{proof}
Adding a positive atom increases every $r_S$, every old coefficient
$w(r_S)$, and every $\kappa_S$. It also adds a positive raw contribution.
Thus $Q$ increases under additions that keep total mass at most one.

Consider one upward move
$\mu'=\mu-p\delta_x+p\delta_y$, where $p\geq0$, $x\leq y$, and the removed
mass is available. Put $S=k(x)$ and $T=k(y)$, so $S\leq T$.
For each profile $V\in R$, write $r_V=\mu(A_V)$ and
$r'_V=\mu'(A_V)$; the prime marks evaluation at the valuation after
the move.
If $S=T$, all profile masses and buffers stay unchanged, and
$g(x)\leq g(y)$ proves $Q(\mu)\leq Q(\mu')$.

Suppose $S<T$. Every cumulative mass $r_V$ is nondecreasing, while
\[
 r'_S=r_S,\qquad r'_T=r_T+p\leq r_S.
\]
Put $\lambda=\mu-p\delta_x$, a positive simple valuation, and let
$Q_{\rm raw}$ denote the first sum in~\eqref{eq:Q}. Linearity of
the exact-cell restriction, with $\lambda_V$ defined
by~\eqref{eq:restriction} for the input $\lambda$, gives the following
identity, interpreted
on each Scott-open set:
\begin{equation}\label{eq:rawmove}
\begin{aligned}
 Q_{\rm raw}(\mu')-Q_{\rm raw}(\mu)
 &=(1-\eta)\sum_{V\in R}\bigl(w(r'_V)-w(r_V)\bigr)g_*\lambda_V\\
 &\quad +(1-\eta)p
 \bigl(w(r'_T)\delta_{g(y)}-w(r_S)\delta_{g(x)}\bigr).
\end{aligned}
\end{equation}
Every summand in the first line is nonnegative because $r'_V\geq r_V$
and $\lambda_V$ is a positive valuation.
Let $d=w(r_S)-w(r'_T)\geq0$. The summand indexed by $T$ in $\kappa_S$
increases, by~\eqref{eq:increment}, by at least $(1-\eta)pd$.
All other buffer increments are nonnegative. Since $g(x)\leq h_S$,
adding this increment at $h_S$ compensates for the second line
of~\eqref{eq:rawmove}: the relevant part is
\[
 (1-\eta)p\left[
 w(r'_T)(\delta_{g(y)}-\delta_{g(x)})
 +d(\delta_{h_S}-\delta_{g(x)})\right]\geq0.
\]
Here the final inequality holds after evaluation on every Scott-open set,
because such sets are upper.
Thus every upward atom move increases $Q$.
Lemma~\ref{lem:finiteflow} proves the result.
\end{proof}

\begin{lemma}\label{lem:ordinary}
For any directed $\mu_i\uparrow\mu$ and every Scott-open $U$,
\begin{equation}\label{eq:ordinary}
 Q(\mu_i)(U)\longrightarrow Q(\mu)(U)
\end{equation}
in ordinary real numbers. This conclusion does not assume global
monotonicity of $Q$.
\end{lemma}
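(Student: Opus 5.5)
We prove the convergence by splitting $Q(\mu_i)(U)$ into finitely many real-valued pieces and showing that each piece converges. Fix a Scott-open $U$. By \eqref{eq:Q},
\[
 Q(\mu_i)(U)=(1-\eta)\sum_{S\in R}w(r_S(\mu_i))\,\mu_{i,S}\bigl(g^{-1}(U)\bigr)
 +\sum_{S\in R}\kappa_S(\mu_i)\,\delta_{h_S}(U).
\]
The sums over $R$ are finite, and $\delta_{h_S}(U)\in\{0,1\}$ does not depend on $i$. It therefore suffices to show three things:
\[
 r_S(\mu_i)\to r_S(\mu),\qquad
 \mu_{i,S}\bigl(g^{-1}(U)\bigr)\to\mu_S\bigl(g^{-1}(U)\bigr),\qquad
 \kappa_S(\mu_i)\to\kappa_S(\mu).
\]
Each limit is taken as a net in $\mathbb R$ over the directed index set. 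Sums and products of convergent real nets converge, so these three facts combine to give \eqref{eq:ordinary}.

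\emph{Cumulative masses.} We have $r_S(\mu_i)=\mu_i(A_S)$, and $A_S$ is Scott open. Directed suprema in $E$ are computed on opens, so $\mu_i(A_S)\uparrow\mu(A_S)=r_S(\mu)$.

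\emph{Raw components.} The set $g^{-1}(U)$ is Scott open because $g=f^2$ is Scott continuous. The last assertion of Lemma~\ref{lem:cell} then gives $\mu_{i,S}(g^{-1}(U))\to\mu_S(g^{-1}(U))$. This limit is a real limit only, not a monotone one.

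\emph{Weights and buffers.} The function $w$ is continuous on $[0,1]$, including at $0$, since $w$ vanishes on $[0,a]$. Likewise $F$ is jointly continuous on $[0,1]^2$: the integrand $\pos{w(r)-w(s)}$ is bounded by $1$ and continuous in $(r,s)$. Composing these continuous functions with the convergent nets $r_S(\mu_i)$ and $r_T(\mu_i)$ gives $w(r_S(\mu_i))\to w(r_S(\mu))$ and, by \eqref{eq:C}, $\kappa_S(\mu_i)\to\kappa_S(\mu)$.

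No step needs monotonicity of $Q$ or of $\mu\mapsto\mu_S$. This matches the remark after the statement of the lemma.

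The only point needing care is that $\mu_{i,S}$ need not increase in $i$, so we cannot argue by passing suprema through a monotone expression. The argument sidesteps this by working purely with real-valued nets, where convergence of a difference of two convergent nets is all that is used. Continuity of $F$ is elementary: it follows from dominated convergence, or directly from the estimate $|F(r,t)-F(r',t')|\leq|t-t'|+|w(r)-w(r')|$, which comes from $0\leq\pos{\cdot}\leq1$ and the $1$-Lipschitz property of the positive part.
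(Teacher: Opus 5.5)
Your proof is correct and follows the same route as the paper's. Like the paper, you show that the cumulative masses $r_S(\mu_i)$, the raw components $\mu_{i,S}(g^{-1}(U))$ (as a difference of two convergent open evaluations, which is the last assertion of Lemma~\ref{lem:cell}), and the weights and buffer coefficients (by continuity of $w$ and $F$) all converge as real nets, and then conclude by finiteness of $R$; your explicit estimate $|F(r,t)-F(r',t')|\leq|t-t'|+|w(r)-w(r')|$ usefully justifies the continuity of $F$, which the paper only asserts.
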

\begin{proof}
For each $S\in R$ and Scott-open $V$, directed suprema in $E$ give
\[
\begin{aligned}
 \mu_i(V\cap A_S)&\uparrow\mu(V\cap A_S),\\
 \mu_i(V\cap B_S)&\uparrow\mu(V\cap B_S).
\end{aligned}
\]
Both limits are finite real numbers. Subtracting therefore yields
$\mu_{i,S}(V)\to\mu_S(V)$ in the ordinary real topology, even though
these differences need not increase. Taking $V=g^{-1}(U)$ gives
convergence of the raw valuation components in~\eqref{eq:Q}.
Also $r_S(\mu_i)=\mu_i(A_S)\uparrow\mu(A_S)=r_S(\mu)$.
Continuity of $w$ and $F$ now gives convergence of every weight and every
buffer coefficient $\kappa_S(\mu_i)$. Since $R$ is finite, the sums of
products in~\eqref{eq:Q} converge to $Q(\mu)(U)$.
This uses only real limits, without taking a directed supremum of the
outputs $Q(\mu_i)$.
\end{proof}

\begin{proposition}\label{prop:scott}
The map $Q_{f,b}:E\to E$ is Scott continuous.
\end{proposition}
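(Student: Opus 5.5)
The plan follows the three stages announced before Lemma~\ref{lem:simplemono}. Lemma~\ref{lem:capacity} already shows that $Q(\mu)\in E$, so $Q$ is a well-defined map $E\to E$. Two things remain: first, that $Q$ is monotone on all of $E$, and second, that it preserves directed suprema. The tools are Lemma~\ref{lem:simplemono}, which gives monotonicity on simple valuations, and Lemma~\ref{lem:ordinary}, which gives ordinary real convergence along directed nets. These are joined by the simple-basis theorem \cite[Theorem~8]{JungTix}, \cite[Corollary~5.4]{Jones}.

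\emph{Monotonicity.} Let $\mu\leq\nu$ in $E$ and fix a Scott-open $U$. By the basis theorem, the simple valuations way below $\mu$ form a directed set $\{\sigma_i\}$ with $\sigma_i\uparrow\mu$. The same holds for the simple valuations way below $\nu$, giving $\tau_j\uparrow\nu$. Every $\sigma\ll\mu$ satisfies $\sigma\ll\nu$, so the first family is contained in the second. I would instead use the directed set of pairs: for each $\sigma\ll\mu$ simple, $\sigma$ is itself one of the $\tau_j$. Concretely, take the directed family $\{\sigma\ll\nu\text{ simple}\}$ and the subfamily $\{\sigma\ll\mu\text{ simple}\}$. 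For each $\sigma\ll\mu$ and each simple $\tau\ll\nu$ with $\sigma\leq\tau$, Lemma~\ref{lem:simplemono} gives $Q(\sigma)(U)\leq Q(\tau)(U)$. The set of simple $\tau\ll\nu$ lying above a fixed $\sigma$ is cofinal in the directed basis of $\nu$, so Lemma~\ref{lem:ordinary} applied to this cofinal subnet gives $Q(\tau)(U)\to Q(\nu)(U)$. Passing to the limit yields $Q(\sigma)(U)\leq Q(\nu)(U)$. Applying Lemma~\ref{lem:ordinary} once more, this time along $\sigma\uparrow\mu$, gives $Q(\mu)(U)\leq Q(\nu)(U)$. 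Since $U$ was an arbitrary open set, $Q(\mu)\leq Q(\nu)$.

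\emph{Preservation of suprema.} Let $\mu_i\uparrow\mu$ be directed in $E$. By monotonicity, $(Q(\mu_i))_i$ is directed and bounded above by $Q(\mu)$. Its supremum $\sup_iQ(\mu_i)$ is computed on opens, so on each open $U$ it equals $\sup_iQ(\mu_i)(U)$. An increasing real net converges to its supremum, so this value is the ordinary limit of $Q(\mu_i)(U)$. Lemma~\ref{lem:ordinary} identifies that limit as $Q(\mu)(U)$. Hence $\sup_iQ(\mu_i)=Q(\mu)$.

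The only delicate point is the monotonicity extension. Lemma~\ref{lem:ordinary} is stated for directed nets with a given supremum, so one must check that the cofinal subfamily of simple $\tau\ll\nu$ above a fixed $\sigma$ is again directed with supremum $\nu$. It is: it is an upper part of a directed set, nonempty because $\sigma\ll\nu$ means some basis element lies above $\sigma$, and it is cofinal. Each such $\tau$ is simple and satisfies $\sigma\leq\tau$, which is exactly the hypothesis needed for Lemma~\ref{lem:simplemono}. Everything else is routine.
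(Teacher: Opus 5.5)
Your proposal is correct and is essentially the paper's proof. Monotonicity is extended from simple valuations by fixing a simple $\sigma\ll\mu$, applying Lemma~\ref{lem:simplemono} on the tail of the simple basis of $\nu$ above $\sigma$, passing to the real limit with Lemma~\ref{lem:ordinary}, and then taking the limit along the simple basis of $\mu$; preservation of directed suprema then follows because increasing real nets converge to their suprema. Your check that this tail is directed, cofinal and has supremum $\nu$ corresponds exactly to the paper's step that $\lambda\leq\nu_{j_0}$ implies $\lambda\leq\nu_j$ for all $j\geq j_0$.
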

\begin{proof}
First we prove global monotonicity. Let $\mu\leq\nu$ and fix a
Scott-open set $U$. Take a simple valuation $\lambda\ll\mu$.
Then $\lambda\ll\nu$. The valuation-domain theorem provides a directed
family of simple valuations $\nu_j\uparrow\nu$, so some $j_0$ satisfies
$\lambda\leq\nu_{j_0}$, and hence $\lambda\leq\nu_j$ for every
$j\geq j_0$. Both valuations in this comparison are simple.
Lemma~\ref{lem:simplemono} and then Lemma~\ref{lem:ordinary} give
\[
 Q(\lambda)(U)\leq\lim_j Q(\nu_j)(U)=Q(\nu)(U).
\]
Here we use only the elementary fact that an eventual non-strict
inequality is preserved by a convergent real net. We do not identify
this limit with a supremum of output valuations.

Next let $\lambda_i$ run through the directed simple valuations way
below $\mu$, with $\lambda_i\uparrow\mu$. The preceding inequality
holds for every $i$. Applying Lemma~\ref{lem:ordinary} a second time gives
\[
 Q(\mu)(U)=\lim_i Q(\lambda_i)(U)\leq Q(\nu)(U).
\]
Since $U$ was arbitrary, $Q(\mu)\leq Q(\nu)$. Thus global monotonicity
has been obtained using ordinary real limits, without presupposing it.

Finally let $\mu_i\uparrow\mu$ be any directed input net. The
monotonicity now proved makes $Q(\mu_i)$ an increasing directed family
in $E$. For each Scott-open $U$, its real evaluations are increasing,
so their ordinary limit equals their supremum. Equation~\eqref{eq:ordinary}
therefore gives
\[
 \Bigl(\sup_i Q(\mu_i)\Bigr)(U)
 =\sup_i Q(\mu_i)(U)=Q(\mu)(U).
\]
Directed suprema in $E$ are computed on opens, so
$\sup_i Q(\mu_i)=Q(\mu)$. Hence $Q$ is Scott continuous.
\end{proof}

We next prove finite separation. The capacity estimate in
Lemma~\ref{lem:capacity} leaves room to round the anchor coefficients
upward while keeping the resulting valuation below the input.
We will use a single rounding grid for all input valuations.

\begin{proposition}\label{prop:finiteSep}
Each map $Q_{f,b}$ is finitely separated.
\end{proposition}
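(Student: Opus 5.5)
We must exhibit a finite set $\mathcal M\subseteq E$ such that for every $\mu\in E$ some $m\in\mathcal M$ satisfies $Q_{f,b}(\mu)\leq m\leq\mu$. The labels will be finite valuations supported on the anchors, namely
\[
 m_c=\sum_{S\in R}c_S\,\delta_{h_S},
\]
where each coefficient $c_S$ ranges over a fixed finite grid $G=\{0,\varepsilon,2\varepsilon,\ldots\}\cap[0,1]$ and $\sum_Sc_S\leq1$. We take $\varepsilon=\eta^2/n$; the exact value only needs to satisfy $n\varepsilon\leq\eta^2$. Since $R$ is finite and $G$ is finite, $\mathcal M$ is finite. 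By Lemma~\ref{lem:capacity} we have $Q(\mu)\leq\tau(\mu)$. It therefore suffices to round the coefficients of $\tau(\mu)$ upward and to show that the rounded anchor valuation still lies below $\mu$.

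Fix $\mu$ and write $\tau(\mu)=\sum_S t_S\delta_{h_S}$, where $t_S=(1-\eta)w(r_S)\beta_S+\kappa_S(\mu)$. Let $c_S$ be the least grid point with $c_S\geq t_S$. Then $\tau(\mu)\leq m_c$ automatically, and $c_S-t_S\leq\varepsilon$. We also want $c_S=0$ whenever $t_S=0$, and ceiling rounding gives this. The main step is to prove $m_c\leq\mu$ on every Scott-open $U$. Here we follow the proof of Lemma~\ref{lem:capacity}. Put $J=\{S:h_S\in U\}$ and $W=\bigcup_{S\in J}\up S$. That proof shows $\sum_{S\in J}t_S\leq(1-\eta^2)\mu(k^{-1}W)$, and~\eqref{eq:anchorlower} gives $k^{-1}W\subseteq U$. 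Hence
\[
 m_c(U)\leq(1-\eta^2)\mu(k^{-1}W)+\varepsilon\,\bigl|\{S\in J:t_S>0\}\bigr|.
\]

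The obstacle is that the additive rounding error $\varepsilon$ is not proportional to $\mu(k^{-1}W)$. When $\mu(k^{-1}W)$ is tiny, the slack $\eta^2\mu(k^{-1}W)$ cannot absorb $\varepsilon$. This is where the threshold built into $w$ is used: $w(r)>0$ only when $r>a$, and by~\eqref{eq:Fbound} we have $\kappa_S(\mu)\leq(1-\eta)nb(r_S-a)_+$. So $t_S>0$ forces $r_S>a$, and for $S\in J$ we have $r_S=\mu(A_S)\leq\mu(k^{-1}W)$. Hence whenever some $S\in J$ carries a nonzero coefficient, $\mu(k^{-1}W)>a$. The grid therefore has to be chosen relative to $a$, with $\varepsilon\leq\eta^2a/n$. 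Then the rounding error over $J$ is at most $n\varepsilon\leq\eta^2a<\eta^2\mu(k^{-1}W)$, and we obtain $m_c(U)\leq\mu(k^{-1}W)\leq\mu(U)$. If every $t_S$ with $S\in J$ vanishes, the bound is trivial.

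Finally, the total mass satisfies $m_c(D)\leq\mu(D)\leq1$, so $m_c\in E$ and $m_c$ lies in the finite set $\mathcal M$ once the constraint on total mass is imposed. This yields~\eqref{eq:fsdef} for $Q_{f,b}$ with separator $\mathcal M$, and the grid is uniform over all inputs, as announced. If one also wants labels in $\Vone(D)$, that adjustment is deferred to Section~\ref{sec:cofinal}; here only $E$ is needed.
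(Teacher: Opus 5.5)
Your proposal is correct and follows essentially the same route as the paper: you round the anchor coefficients of $\tau(\mu)$ upward on a single grid fixed in advance with mesh at most $\eta^2a/n$, and you use the threshold property ($t_S>0$ forces $r_S>a$, hence the relevant open set has $\mu$-mass exceeding $a$) to absorb the additive rounding error $n\varepsilon$ into the slack $\eta^2\mu$ left by Lemma~\ref{lem:capacity}. The only blemish is your opening choice $\varepsilon=\eta^2/n$ together with the claim that $n\varepsilon\leq\eta^2$ suffices, which is false; it must be replaced by $\varepsilon\leq\eta^2a/n$, exactly as your third paragraph concludes.
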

\begin{proof}
Fix $f,b$ and write $Q=Q_{f,b}$ throughout this proof.
Write $\tau(\mu)=\sum_S d_S(\mu)\delta_{h_S}$ using the indexed
coefficients in~\eqref{eq:tau}. If $d_S(\mu)>0$, then $r_S>a$.
Indeed, either $w(r_S)>0$, or a positive $F(r_S,r_T)$ occurs;
both imply $r_S>a$.

Choose a positive integer $N$ so large that
\begin{equation}\label{eq:grid}
 \frac{n}{N}<\eta^2a.
\end{equation}
The same $N$ is used for every input $\mu$: it depends only on the
fixed parameters $n,\eta,a$.
Define the rounded coefficients by
\[
 \widehat d_S=\frac{\lceil Nd_S(\mu)\rceil}{N}.
\]
Here $\lceil t\rceil$ denotes the ceiling of $t$, namely the smallest
integer greater than or equal to $t$, and the hat marks the rounded
coefficient. Thus $\widehat d_S$ is the smallest integer multiple of
$1/N$ that is at least $d_S(\mu)$. In particular,
\[
 d_S(\mu)\leq\widehat d_S<d_S(\mu)+\frac1N,
 \qquad d_S(\mu)=0\ \Longrightarrow\ \widehat d_S=0.
\]
Put $\gamma_\mu=\sum_S\widehat d_S\delta_{h_S}$.
Clearly $Q(\mu)\leq\tau(\mu)\leq\gamma_\mu$.

For an open $U$ with $\tau(\mu)(U)=0$, rounding creates no mass
on $U$. Otherwise some $h_S\in U$ has $d_S>0$, and
\eqref{eq:anchorlower} gives $\mu(U)\geq r_S>a$.
The capacity bound and~\eqref{eq:grid} yield
\[
 \gamma_\mu(U)
 \leq\tau(\mu)(U)+\frac nN
 \leq(1-\eta^2)\mu(U)+\frac nN
 \leq\mu(U).
\]
Thus $\gamma_\mu\leq\mu$, including the total-mass test $U=D$.
Every such witness belongs to the fixed finite set
\begin{equation}\label{eq:Gamma}
 \Gamma=\left\{\sum_{S\in R}\frac{j_S}{N}\delta_{h_S}:
 j_S\in\mathbb N_0,\quad\sum_{S\in R}j_S\leq N\right\}.
\end{equation}
Repeated anchors do not affect finiteness or the inequalities;
the indexed coefficients may be merged afterwards.
We have $Q(\mu)\leq\gamma_\mu\leq\mu$ with
$\gamma_\mu\in\Gamma$, proving finite separation.
\end{proof}

\section{Proof of the main theorem}\label{sec:cofinal}

We now combine the preceding approximants into a directed FS approximate
identity. By Lemma~\ref{lem:saturation}, it suffices to show that one
approximant can meet any finite list of approximation requirements.

For this purpose, assume that $D$ is an FS-domain and fix an FS approximate
identity $(f_i)_{i\in I}$. Its index set $I$ is directed, with
$i\leq i'$ implying $f_i\leq f_{i'}$.
For each $i$, choose a finite separator $M_i$ and representatives as
in Section~\ref{sec:construction}; write $k_i$ for the resulting profile map,
$R_i=k_i[D]$ for its image, and $n_i=|R_i|$.
These choices are made once for each $i$, independently of $b$ and
of the input valuations.

\begin{lemma}\label{lem:scaled}
For this approximate identity, the Scott-continuous maps
\[
 R_{i,t}:E\longrightarrow E,\qquad
 R_{i,t}(\mu)=t\,(f_i^2)_*\mu,\qquad i\in I,\quad 0<t<1,
\]
are directed and have pointwise supremum $\id_E$.
Here $t$ multiplies the mass of the pushforward valuation.
The indices are ordered by
$(i,t)\leq(i',t')$ if $i\leq i'$ and $t\leq t'$.
\end{lemma}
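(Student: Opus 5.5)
We need to show three things about the maps $R_{i,t}$: each is Scott continuous, the family is directed, and its pointwise supremum is $\id_E$. All three reduce to properties of the pushforward.

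\emph{Scott continuity.} Since $f_i^2$ is Scott continuous, preimages of Scott-open sets are Scott open, so $(f_i^2)_*\mu$ is again a continuous valuation. Its total mass is at most $\mu(D)\leq1$, and after scaling by $t<1$ it stays in $E$. Monotonicity in $\mu$ is immediate because $\mu\leq\nu$ is tested on the open set $(f_i^2)^{-1}(U)$. Preservation of directed suprema follows because suprema in $E$ are computed on opens: $\sup_j\mu_j\bigl((f_i^2)^{-1}U\bigr)=\mu\bigl((f_i^2)^{-1}U\bigr)$, and multiplication by $t$ commutes with suprema of reals.

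\emph{Directedness.} The index set $I\times(0,1)$ with the product order is directed. Suppose $i\leq i'$ and $t\leq t'$. Then $f_i\leq f_{i'}$, and by monotonicity of $f_{i'}$ we get $f_i^2=f_if_i\leq f_{i'}f_i\leq f_{i'}f_{i'}$. Since opens are upper sets, $(f_i^2)^{-1}U\subseteq(f_{i'}^2)^{-1}U$. Hence $t(f_i^2)_*\mu(U)\leq t'(f_{i'}^2)_*\mu(U)$, so the assignment $(i,t)\mapsto R_{i,t}$ is monotone, which gives directedness of the family.

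\emph{Supremum.} The main step is showing $\sup_{i,t}R_{i,t}(\mu)=\mu$. Each $R_{i,t}(\mu)\leq\mu$, since $f_i^2\leq\id$ gives $(f_i^2)^{-1}U\subseteq U$. For the reverse inequality, fix $U$ and prove $\mu\bigl((f_i^2)^{-1}U\bigr)\uparrow\mu(U)$. I would first show that $\{f_i^2\}$ is directed with supremum $\id_D$. For $x\in D$ and $y\ll x$, pick $z$ with $y\ll z\ll x$ and some $i$ with $z\leq f_i(x)$ and $y\leq f_i(z)$, using directedness of $(f_i)$ and $\sup_if_i(x)=x$. Then $y\leq f_i(z)\leq f_i^2(x)$, and continuity of $D$ gives $\sup_if_i^2(x)=x$. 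Consequently, for $x\in U$ some $f_i^2(x)\in U$ because $U$ is Scott open, so $U=\bigcup_i(f_i^2)^{-1}U$. This union is directed, and continuity of $\mu$ on directed unions gives the claimed limit. Finally, letting $t\uparrow1$ recovers $\mu(U)$. Thus the supremum of the evaluations is $\mu(U)$ on every open set, and since suprema in $E$ are computed on opens, $\sup_{i,t}R_{i,t}=\id_E$.
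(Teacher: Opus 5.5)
Your proof is correct and follows the paper's route: monotonicity in $(i,t)$, the opens $(f_i^2)^{-1}U$ increasing to $U$, continuity of $\mu$ on directed unions, and then $t\to1^-$. The one local difference is how you obtain $\sup_i f_i^2=\id_D$. You use way-below and interpolation in the continuous dcpo $D$, as in the last step of Lemma~\ref{lem:saturation}. The paper instead uses Scott continuity of the $f_i$, via $\sup_i\sup_j f_i(f_j(x))=x$ and the diagonal bound $f_if_j\leq f_k^2$, which does not need continuity of $D$. You also verify the Scott continuity of $R_{i,t}$ explicitly, which the paper takes for granted.
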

\begin{proof}
The family $(f_i^2)$ is directed and has supremum $\id_D$.
For the latter assertion, Scott continuity gives
\[
 \sup_{i\in I}\sup_{j\in I}f_i(f_j(x))
 =\sup_{i\in I}f_i(x)=x.
\]
For any pair $i,j$, choose $k\geq i,j$. Pointwise order and
monotonicity give $f_i(f_j(x))\leq f_k(f_k(x))=f_k^2(x)$.
Thus the diagonal pairs $(k,k)$ dominate all pairs $(i,j)$,
which is why the squares have the same supremum.
For each open $U$, the opens $(f_i^2)^{-1}U$ increase with union $U$.
Continuous valuations therefore give $(f_i^2)_*\mu\uparrow\mu$.
Finally, as $t\to1^-$, that is, as $t$ approaches $1$ from below,
the scalar factors tend to $1$. Directedness follows from pointwise
order of the underlying maps and from increasing nonnegative scalars.
\end{proof}

\begin{proposition}\label{prop:batch}
The maps $Q_{f_i,b}$, with $i\in I$ and $0<b<n_i^{-2}$,
satisfy the finite-batch hypothesis of
Lemma~\ref{lem:saturation} on $E$.
\end{proposition}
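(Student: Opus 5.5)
The plan is to reduce the finite-batch condition to a lower-semicontinuity estimate for the approximants at a single open set, and then to use the fact that, for a continuous dcpo $D$, the Scott topology on $E$ agrees with the weak topology. The weak topology has as subbasic opens the sets $\{\nu:\nu(U)>r\}$ with $U\in\OO(D)$ (Jones, Kirch). I take this agreement as a standard fact about valuation domains; it is not proved earlier in the paper.

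Fix a finite list $u_j\ll\mu_j$ in $E$. The set $\{\nu: u_j\ll\nu\}$ is Scott open and contains $\mu_j$. By the agreement of topologies it contains a finite intersection
\[
 \bigcap_k\{\nu:\nu(U_{jk})>r_{jk}\},\qquad r_{jk}<\mu_j(U_{jk}).
\]
Only finitely many pairs $(\mu_j,U_{jk})$ occur. It therefore suffices to find one index $(i,b)$ with $0<b<n_i^{-2}$ such that
\[
 Q_{f_i,b}(\mu_j)(U_{jk})>r_{jk}
\]
for all $j,k$. Then $u_j\ll Q_{f_i,b}(\mu_j)$, and in particular $u_j\leq Q_{f_i,b}(\mu_j)$.

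The core step is a lower estimate for a single valuation $\mu$ and open $U$. I would drop the buffer part of~\eqref{eq:Q}, which is nonnegative, and fix $\varepsilon>0$. Using $w(r)\geq 1+b\log\varepsilon$ for $r\geq\varepsilon$ and $w\geq0$, this gives
\[
 Q_{f_i,b}(\mu)(U)\geq(1-\eta)(1+b\log\varepsilon)\sum_{S:\,r_S\geq\varepsilon}g_*\mu_S(U).
\]
By~\eqref{eq:decomp}, $g_*\mu=\sum_S g_*\mu_S$, and each component satisfies $g_*\mu_S(U)\leq\beta_S\leq r_S$. The omitted cells therefore contribute at most $n_i\varepsilon$, and
\[
 Q_{f_i,b}(\mu)(U)\geq(1-n_i^2b)(1+b\log\varepsilon)\bigl((f_i^2)_*\mu(U)-n_i\varepsilon\bigr).
\]
The parameters are then chosen in order. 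First, by Lemma~\ref{lem:scaled}, $(f_i^2)_*\mu_j(U_{jk})\uparrow\mu_j(U_{jk})$, so one $i$ (by directedness of $I$) makes $(f_i^2)_*\mu_j(U_{jk})$ exceed $r_{jk}$ by a margin $\theta>0$ for all $j,k$. With $i$, and hence $n_i$, now fixed, choose $\varepsilon$ with $n_i\varepsilon<\theta/3$. Finally choose $b$ so small that the factor $(1-n_i^2b)(1+b\log\varepsilon)$ is close enough to $1$ to keep the product above every $r_{jk}$. This uses only finitely many strict inequalities, so one $b$ works for all of them.

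The main obstacle is that this route depends on the Scott/weak topology agreement rather than on anything proved in the paper. The order of quantifiers also matters: $n_i$ must be fixed before $\varepsilon$ and $b$, because the error $n_i\varepsilon$ and the constraint $b<n_i^{-2}$ both depend on $i$. If one prefers to avoid the topology result, the fallback is the splitting characterization of $u_j\ll\mu_j$ for simple $u_j$: it gives strict inequalities $u_j(U)<\mu_j(U)$ on the finitely many unions of upper sets $\up$ of support points of $u_j$, with a uniform gap. One then checks $u_j\leq Q(\mu_j)$ via the splitting lemma, with the same estimate applied only to those finitely many opens.
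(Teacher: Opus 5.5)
Your argument is correct, but it takes a different route from the paper.

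\textbf{Your route.} You invoke the standard theorem that, for continuous $D$, the Scott topology on $\Vsub(D)$ coincides with the weak topology. This turns each $u_j\ll\mu_j$ into finitely many strict inequalities $\nu(U_{jk})>r_{jk}$ with an additive margin. You then absorb the cells of small cumulative mass by an $\varepsilon$-truncation costing $n_i\varepsilon$, and fix $i$, then $\varepsilon$, then $b$.

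\textbf{The paper's route.} It needs no topology. By Lemma~\ref{lem:scaled}, the maps $t(f_i^2)_*$ form a directed family with supremum $\id_E$. The definition of $\ll$ alone therefore gives one pair $(i,t)$ with $\theta_j\leq t(f_i^2)_*\mu_j$ for every $j$, so the scalar $t<1$ plays the role of your margin, multiplicatively. Instead of truncating, the paper takes $r_{\min}$, the least $r_S(\mu_j)$ over cells with $\beta_S(\mu_j)>0$; it is positive because only finitely many inputs occur. It then chooses $b$ with $(1-n^2b)w_b(r_{\min})\geq t$. Zero-mass cells contribute the zero valuation, so this yields the full valuation inequality $Q_{f_i,b}(\mu_j)\geq t(f_i^2)_*\mu_j\geq\theta_j$, not merely inequalities on finitely many opens.

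\textbf{Comparison.} Your route gives the slightly stronger conclusion $u_j\ll Q_{f_i,b}(\mu_j)$. The price is a nontrivial external theorem, and the direction you need, Scott-open implies weak-open, itself rests on the splitting lemma. The paper's route is self-contained and uses only results already established.

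\textbf{Your fallback.} Two corrections are needed there. The relevant opens are the way-above sets $\{y:a\ll y\}$ of the support points, not the principal upper sets $\up a$, which are generally not Scott open. Also, a general $u_j$ must first be replaced by a simple $s_j$ with $u_j\leq s_j\ll\mu_j$.
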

\begin{proof}
Let $\theta_j\ll\mu_j$ for $j=1,\ldots,m$ be finitely many tests.
For each $j$, the way-below relation and Lemma~\ref{lem:scaled}
give a pair $(i_j,t_j)$ with $\theta_j\leq R_{i_j,t_j}(\mu_j)$.
Directedness supplies one pair $(i,t)$ above all these finitely many
pairs. Thus $0<t<1$ and
\[
 \theta_j\leq t(f_i^2)_*\mu_j\quad\text{for every }j.
\]
Keep this index $i$, the scalar $t$, and all inputs $\mu_j$ fixed.
For readability write $R=R_i$ and $n=n_i$ in the rest of this proof.
If every $\beta_S(\mu_j)$ is zero, then all inputs and targets are
zero, and any $b\in(0,n^{-2})$ gives the required map.
Otherwise define
\[
 r_{\min}=\min\{r_S(\mu_j):1\leq j\leq m,\ S\in R,
                         \ \beta_S(\mu_j)>0\}.
\]
This is the minimum of a nonempty finite collection of positive
numbers: $r_S(\mu_j)\geq\beta_S(\mu_j)>0$ for every entry in the
collection. Hence $r_{\min}>0$. It depends on the fixed profiles and
inputs, and is independent of $b$.

We now vary only $b$. Recall from~\eqref{eq:wF} that
\[
 w_b(r_{\min})=\max\{0,1+b\log r_{\min}\}.
\]
Since $\log r_{\min}$ is a fixed finite real number,
$b\log r_{\min}\to0$ as $b\to0^+$.
Here $b\to0^+$ means that $b$ approaches zero through positive values.
Consequently
\[
 w_b(r_{\min})\to1,\qquad 1-n^2b\to1
 \qquad\text{as }b\to0^+,
\]
and therefore
\[
 (1-n^2b)w_b(r_{\min})\to1
 \qquad\text{as }b\to0^+.
\]
Because the chosen $t$ satisfies $t<1$, we can take one sufficiently
small $b\in(0,n^{-2})$ such that
$(1-n^2b)w_b(r_{\min})\geq t$.
For every cell with $\beta_S(\mu_j)>0$, the inequality
$r_S(\mu_j)\geq r_{\min}$ and monotonicity of $w_b$ then give
\[
 (1-n^2b)w_b(r_S(\mu_j))
 \geq(1-n^2b)w_b(r_{\min})\geq t.
\]
The left-hand side is exactly the coefficient of
$(f_i^2)_*(\mu_j)_S$ in the raw part of~\eqref{eq:Q}.
Cells with $\beta_S(\mu_j)=0$ contribute the zero valuation.
The buffer contributions are nonnegative, so
\[
 Q_{f_i,b}(\mu_j)\geq t\sum_S(f_i^2)_*(\mu_j)_S
 =t(f_i^2)_*\mu_j\geq\theta_j
\]
simultaneously for every $j$. Here $(\mu_j)_S$ denotes the exact-cell
component of $\mu_j$ defined in~\eqref{eq:restriction}.
\end{proof}

\begin{proof}[Proof of Theorem~\ref{thm:main}]
The valuation-domain theorem makes $E=\Vsub(D)$ continuous.
Propositions~\ref{prop:scott}, \ref{prop:finiteSep} and
\ref{prop:batch}, followed by Lemma~\ref{lem:saturation}, give the
directed FS approximate identity
\begin{equation}\label{eq:finalFS}
 \left\{Q_{f_i,b}^{\,2}:\ i\in I,\quad 0<b<n_i^{-2}\right\}.
\end{equation}
Representatives are fixed for each $i$; the saturation argument applies
to the resulting set of maps. In particular, directedness is proved
for the square family, and is not assumed for the logarithmic parameters.
This proves the subprobability assertion.

Let $\bot$ be the least point of $D$. Define the normalization map
$N:E\to\Vone(D)$ by
\[
 N(\mu)=\mu+(1-\mu(D))\delta_\bot\in\Vone(D).
\]
It places the missing mass $1-\mu(D)$ at the least point.
On a proper Scott open, $N(\mu)$ agrees with $\mu$; on $D$ its
value is one. These evaluations show directly that $N$ is monotone
and Scott. The inclusion $j:\Vone(D)\to\Vsub(D)$ is Scott and
$Nj=\id_{\Vone(D)}$. FS-domains are closed under Scott retractions:
if $e:Y\to X$ and $p:X\to Y$ are Scott-continuous maps with
$pe=\id_Y$, and $q:X\to X$ has finite separating set $M$, then $pqe$ is
separated by $p[M]$, and a directed approximate identity is preserved
by this composition. Applying this closure property to $N,j$ proves
that $\Vone(D)$ is FS.
\end{proof}

\begin{flushleft}
\small
\textsc{Xiaoyong Xi}\\
School of Mathematics and Statistics, Jiangsu Normal University,
Jiangsu, China.\\
\textit{Email:} \href{mailto:xixy@yctu.edu.cn}{xixy@yctu.edu.cn}

\medskip
\textsc{Chong Shen}\\
School of Mathematical Sciences, Beijing University of Posts and
Telecommunications, Beijing, China. Key Laboratory of Mathematics and
Information Networks (Beijing University of Posts and Telecommunications),
Ministry of Education.\\
\textit{Email:} \href{mailto:shenchong0520@163.com}{shenchong0520@163.com}

\medskip
\textsc{Dongsheng Zhao}\\
Mathematics and Mathematics Education, National Institute of Education,
Nanyang Technological University, 1 Nanyang Walk, Singapore 637616.\\
\textit{Email:} \href{mailto:dongsheng.zhao@nie.edu.sg}{dongsheng.zhao@nie.edu.sg}
\end{flushleft}

\begin{thebibliography}{9}

\bibitem{ChenKouLyu}
Y. Chen, H. Kou and Z. Lyu,
\emph{Finite-valuation approximable structures: a solution to the
Jung--Tix problem of probabilistic powerdomains},
arXiv:2608.03073v3, 2026.
\href{https://arxiv.org/abs/2608.03073v3}{arXiv:2608.03073v3}.

\bibitem{Jones}
C. Jones,
\emph{Probabilistic Non-Determinism},
PhD thesis, University of Edinburgh, 1990;
LFCS report ECS-LFCS-90-105.
\href{https://hdl.handle.net/1842/413}{hdl:1842/413}.

\bibitem{JungTix}
A. Jung and R. Tix,
The troublesome probabilistic powerdomain,
\emph{Electronic Notes in Theoretical Computer Science}
\textbf{13} (1998), 70--91.
\href{https://doi.org/10.1016/S1571-0661(05)80216-6}
{doi:10.1016/S1571-0661(05)80216-6}.

\end{thebibliography}
\end{document}